\documentclass[10pt,a4paper]{article}

\usepackage[english]{babel}
\usepackage[T1]{fontenc}
\usepackage{bbm}
\usepackage{graphicx}
\usepackage{verbatim}
\usepackage{amsmath}
\usepackage{amssymb}
\usepackage{amsthm}
  \theoremstyle{plain}
  \newtheorem{theorem}{Theorem}
  \newtheorem{lemma}{Lemma}
  
  \newtheorem{corollary}{Corollary}
  \newtheorem{assumption}{Assumption}

  \theoremstyle{remark}
  \newtheorem{remark}{Remark}
\usepackage{mathrsfs}
\usepackage{mathtools}
\usepackage[left=2.7cm,right=2.7cm,top=3.1cm,bottom=3.1cm]{geometry}
\usepackage[colorlinks=true, allcolors=blue, hypertexnames=false]{hyperref}
\usepackage{subfigure}
\usepackage{graphicx}
\usepackage{algpseudocode,float,algorithmicx,algorithm}
\usepackage[numbers]{natbib}
\makeatletter
\newenvironment{breakablealgorithm}
{% \begin{breakablealgorithm}
	\begin{center}
		\refstepcounter{algorithm}% New algorithm
		\hrule height.8pt depth0pt \kern2pt% \@fs@pre for \@fs@ruled
		\renewcommand{\caption}[2][\relax]{% Make a new \caption
			{\raggedright\textbf{\ALG@name~\thealgorithm} ##2\par}%
			\ifx\relax##1\relax % #1 is \relax
			\addcontentsline{loa}{algorithm}{\protect\numberline{\thealgorithm}##2}%
			\else % #1 is not \relax
			\addcontentsline{loa}{algorithm}{\protect\numberline{\thealgorithm}##1}%
			\fi
			\kern2pt\hrule\kern2pt
		}
	}{% \end{breakablealgorithm}
		\kern2pt\hrule\relax% \@fs@post for \@fs@ruled
	\end{center}
}
\makeatother

\setlength{\parskip}{0.35em}
 \sloppy
 \allowdisplaybreaks[4]
\pdfoutput=1
\begin{document}

\title{Policy Optimization of Finite-Horizon Kalman Filter with Unknown Noise Covariance\thanks{This work is supported in part by the National Natural Science Foundation of China (62173191).}}
\author{Haoran Li\thanks{College of Artificial Intelligence, Nankai University, Tianjin 300350, P.R. China. Email: {\tt 2768691501@qq.com}.}  \and Yuan-Hua Ni\thanks{College of Artificial Intelligence, Nankai University, Tianjin 300350, P.R. China. Email: {\tt yhni@nankai.edu.cn}.}}
	
\date{\today}

\maketitle

\begin{abstract}
This paper is on learning the Kalman gain by policy optimization method. Firstly, we reformulate the finite-horizon Kalman filter   as a policy optimization problem of the dual system. Secondly, we obtain the global linear convergence of exact gradient descent method in the setting of known parameters. Thirdly, the gradient estimation and stochastic gradient descent method are proposed to solve the policy optimization problem, and further the global linear convergence and sample complexity of stochastic gradient descent are provided for the setting of unknown noise covariance matrices and known model parameters.   

\textbf{Keywords:} Kalman filter, linear-quadratic regulator, stochastic gradient descent

\end{abstract}

\section{Introduction}

Kalman filter (KF) is widely used in the fields of control and estimation, such as in road vehicle tracking \cite{6426451,HABIB201311,7015597}, spacecraft orbit estimation and control \cite{197900}, active power filter \cite{4538497}, medical diagnosis \cite{5521169}, and ship control \cite{197901}. However, the noise covariance matrices are usually unknown, and this makes it difficult to use classical KF in practical problems. To solve this puzzle, many approaches have been proposed \cite{1100100}, and there are mainly four types: Bayesian inference, maximum likelihood estimation, innovation correlation and noise variance matching. Bayesian inference uses recursive method to calculate the posterior estimation of unknown parameters under observation, and then calculates the joint posterior probability of state and unknown parameters \cite{4082201,10.1115/1.3662552}; 
maximum likelihood estimation uses nonlinear programming method to calculate the optimal probability density function \cite{1180077,Smith1971EstimationOT}. %Both of these methods have higher requirements for computation. 
Covariance matching technique is to make the covariance of  innovation sequence consistent with its theoretical value \cite{1101260}. Innovation correlation method {uses the system output to generate a set of equations related to the model parameters}, and solves the these equations to obtain the estimations of unknown parameters; this method has been extensively studied in recent year \cite{1100694,1624478,ODELSON2006303,9044358}. Interestingly, most of the analysis of these methods is based on the {asymptotic viewpoint}.

In this paper, we consider to solve a finite-horizon KF problem (with unknown covariance matrices) from the perspective of policy optimization, namely, to learn the Kalman gain by the {stochastic gradient descent (SGD) method}. Compared with the aforementioned works, we focus on exploring the global convergence, convergence rate and sample complexity of the SGD method in the policy optimization landscape. 
The basis of our research is the duality between control and estimation, and the recent developments of policy gradient (PG) method. {In the areas of optimal control and reinforcement learning}, zeroth-order PG method has been specifically analyzed. The work  \cite{pmlr-v80-fazel18a} reformulates the classical linear-quadratic regulator (LQR) as an optimization problem of  feedback gain, and obtain the gradient dominance condition and almost smoothness of optimization function by using the advantage function. By the two properties and zeroth-order optimization technique, global linear convergence of {gradient descent (GD) method} is established in the setting of known parameters and unknown parameters, respectively {\cite{pmlr-v80-fazel18a}}. Based on this work, the convergence and sample complexity of PG method for several variants of LQR problem are analyzed \cite{doi:10.1137/20M1382386,9130755,9448427,10005813,9616447,zhang2021policy}. For finite-horizon LQR problem, Hambly et al. \cite{doi:10.1137/20M1382386} obtain the global linear convergence of PG method in the setting of both known and unknown parameters, and analyzes the convergence of projected PG method. Mohammadi et al. \cite{9448427} explore the sample complexity and convergence of PG method for infinite-horizon LQR problem with continuous-time system. Moreover, for LQR problem with risk constraints \cite{10005813}, LQR problem with decentralized system \cite{9616447}, $\mathcal{H}_2/\mathcal{H}_\infty$ problem \cite{zhang2021policy}, the convergence and sample complexity of PG method have also been established.

According to the duality between control and estimation, some  convergence results of PG method for solving KF problems have been obtained too  \cite{10156641,Talebi2023DatadrivenOF,Umenberger2022GloballyCP}. The work \cite{10156641} develops a receding-horizon policy gradient (RHPG) framework for learning the steady-state Kalman filter, and obtains the sample complexity of convergence of RHPG to optimal {filter with error $\epsilon$}. In  \cite{Talebi2023DatadrivenOF},  the problem of determining the steady-state Kalman gain is reformulated into a policy optimization problem for the adjoint system, and a SGD method is proposed to learn the steady-state Kalman gain with unknown noise covariance matrices and known model parameters. The work {\cite{Umenberger2022GloballyCP} solves the dynamic filter problem for output estimation by policy search method, and proposes a regularizer to establish the global convergence of policy search method.}
 
The objective of this paper is to solve the finite-horizon KF problem with unknown noise covariance matrices. Compared with existing results, our contributions are as follows. 
\begin{itemize}
	\item  We introduce a cost function on the observation and prove that the minimizer to this cost function is the Kalman gain; we further construct an unbiased estimation of gradient by using the model parameters and observations, and show the sample complexity and convergence rate of the resulting SGD method. 
	The results of sample complexity indicates that our estimation method needs less samples than that of zeroth-order optimization technique \cite{10156641}, which investigates a steady-state KF problem in the setting of both unknown model  parameters and unknown noise covariance matrices. 
	
	%By solving receding-horizon KF problems with SGD method and zeroth-order optimization technique, the  steady-state optimal solution to infinite-horizon KF problem is approximated and the sample complexity of this framework is also studied \cite{10156641}. 
	%
	%Compared with \cite{10156641}, we propose a SGD method to solve a  finite-horizon KF problem in the setting of unknown noise covariance matrices and yet known system parameters. 
	%
	%Inspired by \cite{10156641}, 

	\item The results of this paper can be viewed as an extension of \cite{Talebi2023DatadrivenOF} (learning the steady-state KF by a SGD method in the setting of known system parameters and unknown noise covariance matrices) to the setting of handling finite-horizon KF problem, and there are mainly two facts that make our results differ from those of \cite{Talebi2023DatadrivenOF}.
	Firstly, the KF is time-dependent in our setting, and we reformulate the learning problem as an optimal control problem of the dual system, which differs from the deterministic one for the backward adjoint system of \cite{Talebi2023DatadrivenOF}. %The duality of this paper is used to analyze the equivalence between the proposed optimal control problem and the finite-horizon KF problem, and yet the duality in \cite{Talebi2023DatadrivenOF} is used to propose the SGD method.
	Secondly, we consider the scenario that system noise and observation noise are both subgaussian; though explicit results are just given for the case of Gaussian noise, they can be easily extended to the setting of subgaussian noise by changing some coefficients only. Note that a bounded random variable is subgaussian and the work  \cite{Talebi2023DatadrivenOF} only considers the scenario of bonded noises. Interestingly, the non-asymptotic error bound of \cite{Talebi2023DatadrivenOF} depends explicitly on the bound of the noises.
	
	%The work \cite{Talebi2023DatadrivenOF} examines . Using the duality between control and estimation, the KF problem is  reformulated as an optimal control problem of the adjoint system, and a SGD method is introduced on this landscape; by the approach of high dimensional statistics, the non-asymptotic error bound guarantee of the proposed SGD method is presented under the assumption that the noise is bounded. 
	
	%\item We establish the global linear convergence guarantee of GD method for finite-horizon KF problem with known and unknown noise covariance matrices respectively, and provide the sample complexity in the setting of unknown noise covariance matrices. Compared with \cite{Talebi2023DatadrivenOF}, our Kalman gains is time-dependent with finite horizon; and unlike \cite{10156641}, we borrow the method from \cite {Talebi2023DatadrivenOF} to construct unbiased gradient estimation based on model parameters, instead of using the zeroth-order optimization method to estimate gradient.
\end{itemize}

The rest of this paper is organized as follows. In Section 2, we first review the standard KF, and then introduce some optimization problem which is shown to be equivalent to the finite-horizon KF problem. Section 3 presents the convergence analysis of exact  GD algorithm to our optimization problem. In Section 4, we present the form of gradient estimation and the framework of SGD method for solving the finite-horizon KF problem, whose convergence  analysis and sample complexity are also studied in this section. Section 5 is a specific discussion on our results and the related  works \cite{10156641,Talebi2023DatadrivenOF}. Section 6 presents some numerical examples to verify the feasibility of the proposed method.

{\textit{Notation}}. Let $\mathbb{N}$ be the set of all the natural numbers. For positive integers $n_0, n_1, n_2$, let $\mathcal{S}^{n_0}$ be the set of all the $\mathbb{R}^{n_0\times n_0}$-valued positive definite matrices, $I_{n_1}$ is the $n_1\times n_1$ unit matrix, $\mathbf{1}_{n_1\times n_1}$ denotes the $n_1\times n_2$ matrix whose elements are all $1$, and $0_{n_1\times n_2}$ is the $n_1\times n_2$ zero matrix. For a random variable $X$, $\mathbb{E}[X]$ is the mathematical expectation of $X$, $||X||_{L_2}$ is the $L_2$ norm of $X$, and $||X||_{\psi_2}$ is the $\psi_2$ norm of $X$. $\sum_{i=k}^{N}a_i$ denotes $a_N+ a_{N-1}+\cdots + a_k$ for  $k <  N$, $a_k$ for $k=N$ and $0$ for $k > N$. $\prod_{i=k}^{N}a_i$ denotes $a_N\times a_{N-1}\times\cdots \times a_k$ for  $k <  N$, $a_k$ for $k=N$ and $1$ for $k > N$. For matrix $A$, let $A^T$, $||A||$, $||A||_F$, $\sigma_{\min}^A$ and $\sigma_{\max}^A$ be, respectively, the transposition, spectral norm,  Frobenius norm, minimal and maximal singular values of $A$, and $A\otimes B$ denotes the kronecker product of matrices $A$ and $B$. If $A$ is square, $tr(A)$ denotes the trace of matrix $A$. 

\section{Problem formulation}

Our goal is to solve a finite-horizon KF problem with unknown noise covariance matrices, which is stated first in this section and then is transformed equivalently into an  optimization problem. Without loss of generality, we assume that the initial time $t_0$ is $0$ and the terminal time $t_{\max}$ is $M$, and denote $\mathbb{T}=\{0,\dots, M-1\}$.
\subsection{Kalman filter}\label{kf_intro}
%{\bf The setting. \/} 
Consider a discrete-time linear time-invariant system
\begin{equation}
	\left\{
	\begin{array}{ll}
		x_{t+1}=Ax_{t}+\omega_{t},\\
		y_{t}=Cx_{t}+v_{t}, ~~~
		t\in\mathbb{N},
	\end{array}
	\right.\label{eq0000}
\end{equation}
where $x_t\in\mathbb{R}^N$ and $y_t\in \mathbb{R}^m$ are the state vector and observation vector, respectively, and $x_0\sim \mathcal{N}(\overline{x}_0,\Sigma_{x_0})$. In (\ref{eq0000}), $\{v_t\}_{t=0}^\infty$ and $\{\omega_t\}_{t=0}^\infty$ are two sequences of Gaussian random vectors which denote the process noises and measurement noises, respectively, with properties 
\begin{align}
	\mathbb{E}[v_t]=0,\ \mathbb{E}[\omega_t]=0,\  \mathbb{E}[v_tv_t^T]=R_t,\ \mathbb{E}[\omega_t\omega_t^T]=Q_t.\label{eqn0}
\end{align} 
Moreover, $x_0, v_t, w_t, t\in\mathbb{N}$, are assumed to be  mutually independent.

As state $x_t$ in system \eqref{eq0000} cannot be obtained directly in practical problems, the goal of KF is to obtain the optimal estimation $\hat{x}_t$ of $x_t$ in the mean square sense based on the observation information $\{y_t\}$. Specifically, suppose that $\hat{x}_0$ has been known and $\hat{x}_0=\overline{x}_0$,  KF can be described as
\begin{equation}
	\left\{
	\begin{array}{ll}
		\widetilde{x}_{t+1}=A\hat{x}_t,\\
		\widetilde{y}_{t+1}=C\widetilde{x}_{t+1},\\
		\hat{x}_{t+1}=A\hat{x}_t+K_t^*(y_{t+1}-\widetilde{y}_{t+1}), ~~~ t\in\mathbb{N},
	\end{array}
	\right.\label{eqKF}
\end{equation}
where $\widetilde{x}_t$ is the prior estimation of $x_t$, and $K_t^*$ is the Kalman gain to minimize $\mathbb{E}[(x_{t+1}-\hat{x}_{t+1})^T(x_{t+1}-\hat{x}_{t+1})]$. In addition, KF can be rewritten as the following form without using prior estimation $\widetilde{x}_t$:
\begin{equation}\label{KFy}
	\begin{aligned}
\left\{
		\begin{array}{l}
			x_{t+1}=Ax_t+\omega_t,\\ 
			y_{t+1}=Cx_{t+1}+v_{t+1},\\
			\hat{x}_{t+1}=A\hat{x}_{t}+K_t^*CA(x_t-\hat{x}_t)+K_t^*C\omega_t+K_t^*v_{t+1},~~~
			t\in\mathbb{N}.
		\end{array}
		\right. 
	\end{aligned}
\end{equation}
\begin{assumption}\label{assumtion-1}
	Let covariance matrices $Q_t, R_{t}, t\in\mathbb{N}$, be positive definite.
\end{assumption}

The problem we want to solve is to learn Kalman gains $\{K_t^*\}_{t=0}^{M-1}$ in the setting of unknown noise covariance matrices, which we named the finite-horizon KF problem in what follows. $K_t^*,\ t\in\mathbb{T}$, can be calculated through the following equations
\begin{align}
	&P_{t+1}^*=AP_t^*A^T+Q_t-Z_t^*(H_t^*)^{-1}Z_t^{*T},\label{eq1}\\
	&K_{t}^*=Z_t^*(H_t^*)^{-1}\label{eq2}
\end{align}
with 
\begin{align*}
	&H_t^*=CAP_t^*(CA)^T+R_{t+1}+CQ_tC^T,\\
	&Z_t^*=AP_t^*(CA)^T+Q_tC^T.
\end{align*}
Here, the positive definiteness of $H_t^*$ is due to Assumption \ref{assumtion-1}. However, $Q_t, R_{t}, t\in\mathbb{T}$, and $P_0=\mathbb{E}[(x_0-\hat{x}_0)(x_0-\hat{x}_0)^T]$ are unknown in our setting, and solving (\ref{eq1}) (\ref{eq2}) is not feasible. %To solve this problem, we reformulate the learning problem into an optimization problem in Section \ref{reform_op}.

\subsection{Optimization formulation}\label{reform_op}

In this section, we reformulate the finite-horizon KF problem with unknown noise covariance matrices as an optimization problem, which is stated as the following one. 
%According to Problem 2, we construct an equivalent problem by observation $\{y_{i}\}_{i=2}^{M+N}$:

$\mathbf{Problem\ 1.}$ Find optimal $\mathbf{K}$ such that 
	\begin{equation}
		\begin{aligned}
			&	\min_{\mathbf{K}\in\mathbb{R}^{N\times Mm}} \mathbb{E}\Bigg[\sum_{t=0}^{M-1}\sum_{n=1}^{N}(y_{t+n+1}-\hat{y}_{t+1}^n)^T(y_{t+n+1}-\hat{y}_{t+1}^n)\Bigg]\\
			&~~~~\,\,s.t.~~~\left\{
			\begin{array}{l}
				x_{t+1}=Ax_t+\omega_t,\\
				y_{t+1}=Cx_{t+1}+v_{t+1},\\
				\hat{x}_{t+1}=A\hat{x}_{t}+K_tCA(x_t-\hat{x}_t)+K_tC\omega_t+K_tv_{t+1},
			\end{array}
			\right.	\label{pro4}
		\end{aligned}
	\end{equation}
	 where
	\begin{align}
		\hat{y}_{t}^n&:=CA^n\hat{x}_t=CA^nA^t\hat{x}_0+\sum_{i=0}^{t-1}CA^nA^{t-i-1}K_i(y_{i+1}-\hat{y}_i^1), n=1,\dots,N,\ t=1,\dots,M,\label{dey}\\
		\mathbf{K}&:=(K_0,\dots,K_{M-1}).
	\end{align}

  We will use the duality between control and estimation \cite{10.1115/1.3662552,402237} to prove the equivalence between Problem 1 and the finite-horizon KF problem. Note that the optimization function of Problem 1 can be rewritten as
\begin{align*}
	 tr&\Bigg(\mathbb{E}\big[\sum_{t=0}^{M-1}\sum_{n=1}^{N}(x_{t+1}-\hat{x}_{t+1})(x_{t+1}-\hat{x}_{t+1})^T\big(CA^n)^TCA^n\big]\Bigg)\\
	&+tr\Bigg(\sum_{t=0}^{M-1}\sum_{n=1}^{N}\sum_{i=0}^{n-1}Q_{t+n-i}(CA^i)^TCA^i+R_{t+n+1}\Bigg);
\end{align*} 
as the second part is a constant, Problem 1 is equal to the following problem.

$\mathbf{Problem\ 2.}$  Find optimal $\mathbf{K}$ such that 
	\begin{equation}
		\begin{aligned}
			&\min_{\mathbf{K}\in\mathbb{R}^{N\times Mm}} \mathbb{E} \Bigg[tr\Bigg(\sum_{t=0}^{M-1}{\sum_{n=1}^{N}  (x_{t+1}-\hat{x}_{t+1})(x_{t+1}-\hat{x}_{t+1})^T(CA^{n})^TCA^{n}}\Bigg)\Bigg]\\
			&~~~~\,\,\,s.t.~~~\left\{
			\begin{array}{l}
				x_{t+1}=Ax_t+\omega_t,\\
				y_{t+1}=Cx_{t+1}+v_{t+1},\\
				\hat{x}_{t+1}=A\hat{x}_{t}+K_tCA(x_t-\hat{x}_t)+K_tC\omega_t+K_tv_{t+1}.
			\end{array}
			\right.		\label{pro3}
		\end{aligned}
	\end{equation}

Let us introduce an optimal control problem of the dual system. 
 
$\mathbf{Problem\ 3.}$  Find optimal $\mathbf{K}$ such that 
\begin{align}
	\begin{aligned}
	~~~	&\min_{\mathbf{K}\in\mathbb{R}^{m\times MN}} \mathbb{E}\Bigg[ \sum_{t=0}^{M-1} {\bigg(s_t^TQ_{M-t-1}s_t+u_t^T(CQ_{M-t-1}C^T+R_{M-t})u_t{+}2u_t^TCQ_{M-t-1}s_t\bigg)} +{s_M^TP_0s_M}\Bigg] \\
		&~~~~~\,\,s.t.~~  \left\{
		\begin{array}{l}
			s_{t+1} = A^Ts_t+(CA)^Tu_t+z_t,\\
			u_t=-K_{M-t-1}^Ts_t,\\
		\end{array}
		\right.
	\end{aligned}\label{pro2}
\end{align}
with{
\begin{equation}\label{dual}
	s_0\sim \mathcal{N}(0,\Sigma), z_t\sim \mathcal{N}(0,
	\Sigma), t=0,\dots,M-2, z_{M-1}=0,
\end{equation}
}
where $s_0$ and $z_t, t\in\mathbb{T}$ are assumed to be mutually independent and {$\Sigma=\sum_{n=1}^{N}(CA^{n})^TCA^{n}$}.

\begin{remark}
	The dual system of Kalman filter is first introduced in  \cite{10.1115/1.3662552}, and we here make some changes in the deterministic one of \cite{10.1115/1.3662552} to adapt to system \eqref{eq0000} and Problem \eqref{pro3}, namely, the dual system in \eqref{pro2} is stochastic. Interestingly, the estimation problem can also be transformed into a LQR problem for some deterministic adjoint system  \cite{Talebi2023DatadrivenOF}, which is a backward equation in time.
\end{remark}

 The following theorem shows the equivalence between Problem 2 and Problem 3, namely, Problem 2 and Problem 3 can be transformed into a same optimization problem with respect to $\mathbf{K}$. Based on this theorem, the specific proof of equivalence between Problem 1 and the finite-horizon KF problem will be shown in Section \ref{eq_pro}.

 To simplify the subsequent proof and for any gains $\mathbf{K}=(K_0,\dots,K_{M-1})$, let  $A_t:=A-K_tCA, t\in\mathbb{T}$, and denote the optimization function of Problem 2 as 
\begin{equation}
\nonumber
f(\mathbf{K}):=\mathbb{E} \Bigg[tr\Bigg(\sum_{t=0}^{M-1}(x_{t+1}-\hat{x}_{t+1})(x_{t+1}-\hat{x}_{t+1})^T\Sigma\Bigg)\Bigg],
\end{equation}
where $x_t, t\in\mathbb{T}$ are generated by $\mathbf{K}$.

 \begin{theorem}\label{thdual}
{The optimal solution of Problem 2 is same to the one of Problem 3.}
 \end{theorem}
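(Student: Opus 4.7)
The plan is to show that $f(\mathbf{K})$ (the cost in Problem 2) and the cost $J(\mathbf{K})$ of Problem 3 are actually equal as functions of $\mathbf{K}$, so that the two minimization problems have the same set of optimizers. The bridge between them is the classical Lyapunov/trace duality between forward state covariance propagation and backward observability gramian propagation.

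First I would write the estimation error $e_t := x_t-\hat{x}_t$ using the last equation in \eqref{pro3}, obtaining the autonomous recursion $e_{t+1}=A_t e_t+(I-K_tC)\omega_t-K_t v_{t+1}$. Taking covariances and using the mutual independence of $x_0,\omega_t,v_t$ yields the Lyapunov recursion $P_{t+1}=A_tP_tA_t^T+W_t$, where $W_t:=(I-K_tC)Q_t(I-K_tC)^T+K_tR_{t+1}K_t^T$, so that $f(\mathbf{K})=\sum_{t=0}^{M-1}\mathrm{tr}(\Sigma P_{t+1})$. Unrolling the recursion in terms of the transition products $\Phi_{t+1,s+1}:=A_tA_{t-1}\cdots A_{s+1}$ (with $\Phi_{s+1,s+1}=I$) and applying the cyclic property of the trace, one rewrites
\begin{equation*}
f(\mathbf{K})=\mathrm{tr}\!\left(P_0\,\widetilde\Sigma_M\right)+\sum_{\tau=0}^{M-1}\mathrm{tr}\!\left(W_\tau\,\widetilde\Sigma_\tau\right),\qquad \widetilde\Sigma_\tau:=\sum_{t=\tau}^{M-1}\Phi_{t+1,\tau+1}^T\,\Sigma\,\Phi_{t+1,\tau+1}.
\end{equation*}

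Next I would substitute $u_t=-K_{M-t-1}^Ts_t$ into the Problem~3 cost. The stage quadratic $s_t^TQ_{M-t-1}s_t+u_t^T(CQ_{M-t-1}C^T+R_{M-t})u_t+2u_t^TCQ_{M-t-1}s_t$ collapses, via the algebraic identity $(I-KC)Q(I-KC)^T+KRK^T=Q-KCQ-QC^TK^T+K(CQC^T+R)K^T$, to $\mathrm{tr}(W_{M-t-1}\,s_ts_t^T)$. Taking expectations and reindexing by $\tau=M-t-1$, and writing $\Sigma_t:=\mathbb{E}[s_ts_t^T]$, gives $J(\mathbf{K})=\mathrm{tr}(P_0\Sigma_M)+\sum_{\tau=0}^{M-1}\mathrm{tr}(W_\tau\Sigma_{M-\tau-1})$. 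The closed-loop dual dynamics $s_{t+1}=A_{M-t-1}^Ts_t+z_t$ with $\Sigma_0=\Sigma$, $\mathrm{Cov}(z_t)=\Sigma$ for $t\le M-2$ and $z_{M-1}=0$ yield the backward Lyapunov recursion $\Sigma_{t+1}=A_{M-t-1}^T\Sigma_tA_{M-t-1}+\Sigma$ (with the last step having no $\Sigma$ term).

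The core step is then to verify $\Sigma_{M-\tau-1}=\widetilde\Sigma_\tau$ for every $\tau\in\mathbb T$, and $\Sigma_M=\widetilde\Sigma_M^{(0)}:=\sum_{t=0}^{M-1}\Phi_{t+1,0}^T\Sigma\,\Phi_{t+1,0}$ (with the obvious interpretation when $P_0$ is the coefficient). This is a short induction on $\tau$ going from $\tau=M-1$ down to $\tau=0$: the base case $\Sigma_0=\Sigma=\widetilde\Sigma_{M-1}$ is immediate from the definition of $\widetilde\Sigma_\tau$, and one descending step uses $\Sigma_{t+1}=A_{M-t-1}^T\Sigma_tA_{M-t-1}+\Sigma$ together with the telescoping $\widetilde\Sigma_{\tau-1}=\Sigma+A_{\tau-1}^T\widetilde\Sigma_\tau A_{\tau-1}$ that follows from the definition of $\widetilde\Sigma_\tau$ by splitting off the $t=\tau-1$ summand. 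Combining these identities gives $f(\mathbf{K})=J(\mathbf{K})$ for all $\mathbf{K}$, whence Problem~2 and Problem~3 share the same optimal solution.

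The only real obstacle is clean bookkeeping between forward time in Problem~2 and reversed time in Problem~3; once the change of index $\tau=M-t-1$ is fixed and the stage-cost algebraic identity is recorded, the rest is Lyapunov duality plus trace cyclicity.
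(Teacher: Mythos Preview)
Your proposal is correct and follows essentially the same route as the paper: both arguments show that the Problem~2 cost and the Problem~3 cost coincide as functions of $\mathbf{K}$ by unrolling the closed-loop recursions on each side and matching the resulting trace expressions (the paper writes the common expression explicitly as \eqref{op_pro}, whereas you package the same computation as Lyapunov/gramian duality plus an induction on $\tau$). One small index slip to fix: the telescoping identity should read $\widetilde\Sigma_{\tau-1}=\Sigma+A_{\tau}^{T}\widetilde\Sigma_{\tau}A_{\tau}$ (not $A_{\tau-1}$), since splitting off the $t=\tau-1$ summand from $\widetilde\Sigma_{\tau-1}$ leaves $\sum_{t\ge\tau}\Phi_{t+1,\tau}^{T}\Sigma\,\Phi_{t+1,\tau}=A_{\tau}^{T}\widetilde\Sigma_{\tau}A_{\tau}$; with that correction the induction closes exactly against the dual recursion $\Sigma_{M-\tau}=A_{\tau}^{T}\Sigma_{M-\tau-1}A_{\tau}+\Sigma$.
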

 
 \begin{proof}
 	The optimization function of Problem 3 can be written as
 	\begin{align}
 		&\mathbb{E}\Bigg[\bigg( \sum_{t=0}^{M-1} s_t^TQ_{M-t-1}s_t+u_t^T(CQ_{M-t-1}C^T+R_{M-t})u_t{+}2u_t^TCQ_{M-t-1}s_t\bigg) +s_M^TP_0s_M\Bigg]\notag\\
 		&=tr\Bigg(\sum_{t=0}^{M-1}\sum_{i=0}^{t}\prod_{j=M-t}^{M-i-1}A_{j}\left((I-K_{M-t-1}C)Q_{M-t-1}(I-K_{M-t-1}C)^T +K_{M-t-1}R_{M-t}K_{M-t-1}\right)\notag\\
 		&\hphantom{=}\cdot \bigg(\prod_{j=M-t}^{M-i-1}A_{j}\bigg)^T\Sigma\Bigg)
 		+tr\left(\sum_{i=0}^{M-1}\prod_{j=0}^{M-i-1}A_jP_0(\prod_{j=0}^{M-i-1}A_j)^T\Sigma\right). \label{eq5}
 	\end{align}
  According to \eqref{eq5}, we can rewritten Problem 3 as an optimization problem with respect to $\mathbf{K}$
  \begin{equation}
  	\begin{aligned}
  		\min_{\mathbf{K}\in\mathbb{R}^{m\times MN}}\ 
  		&\Bigg[tr\Bigg(\sum_{t=0}^{M-1}\sum_{i=0}^{t}\prod_{j=M-t}^{M-i-1}A_{j}
  		((I-K_{M-t-1}C)Q_{M-t-1}(I-K_{M-t-1}C)^T\\
  		&+K_{M-t-1}R_{M-t}K_{M-t-1})
  		\bigg(\prod_{j=M-t}^{M-i-1}A_{j}\bigg)^T\Sigma\Bigg)
  		+tr\Bigg(\sum_{i=0}^{M-1}\prod_{j=0}^{M-i-1}A_jP_0(\prod_{j=0}^{M-i-1}A_j)^T\Sigma\Bigg)\Bigg].  \label{op_pro}
  	\end{aligned}
  \end{equation}
Using \eqref{eqn0}, the optimization function in \eqref{op_pro} can be rewritten as
\begin{equation}
	\begin{aligned}
		&tr\Bigg(\sum_{t=0}^{M-1}\sum_{i=0}^{M-t-1}\prod_{j=t+1}^{M-i-1}A_{j}((I-K_{t}C)\mathbb{E}[\omega_t\omega_t^T](I-K_{t}C)^T+K_{t}\mathbb{E}[v_{t+1}v_{t+1}^T]K_{t})(\prod_{j=t+1}^{M-i-1}A_{j})^T\Sigma\Bigg)\\
		&+tr\Bigg(\sum_{i=0}^{M-1}\prod_{j=0}^{M-i-1}A_j\mathbb{E}[(x_0-\hat{x}_0)(x_0-\hat{x}_0)^T](\prod_{j=0}^{M-i-1}A_j)^T\Sigma\Bigg).\label{eq5_}
	\end{aligned}
\end{equation}
 	 According to \eqref{eqKF}, we have
 	\begin{align}
 		x_{t+1}-\hat{x}_{t+1}=\sum_{i=0}^{t}\prod_{j=i+1}^{t}A_j\big((I-K_iC)\omega_i-K_iv_{i+1}\big)+\prod_{j=0}^{t}A_j(x_0-\hat{x}_0),~~ t\in\mathbb{T}.\label{eq6}
 	\end{align}
 	Taking equation \eqref{eq6} into \eqref{eq5_}, we can obtain the optimization function of Problem 2
 	\begin{align}
 		\mathbb{E}\Bigg[tr\Bigg(\sum_{t=0}^{M-1}(x_{t+1}-\hat{x}_{t+1})(x_{t+1}-\hat{x}_{t+1})^T\Sigma\Bigg)\Bigg];
 	\end{align}
 this means that Problem 2 can also be transformed into  optimization problem \eqref{op_pro}. This completes the proof.
 \end{proof}

%\textbf{A simple implementation corresponding to Riccati equation \eqref{eq1} is}

%For the first problem, the problem \eqref{pro1} is not regular as a optimal problem which means it can have multiple optimal solutions. One method to solve this problem is adding noise into the equation of state \cite{doi:10.1137/20M1382386}, the finite optimal control problem can be solved by the policy gradient method for the Gaussian white noise $\omega_t$ make the gradient of Kalman gain $K_t$ becomes a combination of an inverse matrix and a matrix which implies the Riccati equation. For the filter problem we can use the arithmetic mean of $y_{t+1},\dots,y_0$ achieve the similar effect.

	To simplify the subsequent analysis, we propose the following symbolic
	\begin{equation}
	\left\{	\begin{aligned}
			&\mathcal{G}_t(X):=X+\sum_{i=0}^{M-t-1}\prod_{j=i}^{M-t-1}A_{M-j-1}^TXA_{M-j-1},\\
			&\mathcal{F}_t(X):=A_{M-t-1}^TXA_{M-t-1},\\
			&\mathcal{D}_{t,s}(X):=\prod_{i=s}^{M-t-1}A_{M-i-1}^TXA_{M-i-1}, t{,s}\in\mathbb{T},\label{defFGD}
		\end{aligned}\right.
	\end{equation}
	 where $\mathcal{G}_t(X)$ can be rewritten as
	\begin{align*}
		\mathcal{G}_t(X)=X+\sum_{i=0}^{M-t-1}\mathcal{D}_{t,i}(X).
	\end{align*}

\subsection{The equivalence between Problem 1 and the finite-horizon KF problem}\label{eq_pro}

To prove the equivalence between Problem 1 and the finite-horizon KF problem, we need to calculate $\nabla f(\mathbf{K})$. Let $\mathbf{K}$ be any gain, and define $P_t:=\mathbb{E}\big[(x_t-\hat{x}_t)(x_t-\hat{x}_t)^T\big],t=1,\dots,M$, where $\hat{x}_t,t\in\mathbb{T}$ are the states generated by gains $\mathbf{K}$. Using \eqref{pro3}, we obtain 
\begin{equation}
	\begin{aligned}
		&P_t=A_{t-1}P_{t-1}A_{t-1}^T+(I-K_{t-1}C)Q_{t-1}(I-K_{t-1}C)^T+K_{t-1}R_tK_{t-1}^T,\label{eqly}~~
		&t=1,\dots,M.
	\end{aligned}
\end{equation} 
Let $H_t$ and $Z_t$ be 
\begin{align*}
	&H_t:=CAP_t(CA)^T+R_{t+1}+CQ_tC^T,\\
	&Z_t:=AP_t(CA)^T+Q_tC^T,~~~~ t\in\mathbb{T}.
\end{align*}
%We get the following theorem to characterize $\nabla f(\mathbf{K})$ by the above equations.

\begin{theorem}\label{thg}
For any $\mathbf{K}\in\mathbb{R}^{m\times MN}$, the gradient $\nabla f(\mathbf{K}):=(\nabla_0 f(\mathbf{K}),\cdots, \nabla_{M-1} f(\mathbf{K}))$ has the form %can be represented as , where $\nabla_t f(\mathbf{K}), t\in\mathbb{T}$ are the gradient of $f(\mathbf{K})$ with respect to $K_t, t\in\mathbb{T}$, and
	\begin{equation}
		\nabla_{t} f(\mathbf{K})=2\Sigma_tE_t,~~ t\in\mathbb{T}, \label{eqg}
	\end{equation}
	where $\nabla_{t} f(\mathbf{K})$ is the gradient of $f(\mathbf{K})$ with respect to $K_t$ and 
	\begin{equation}
		\nonumber
		\begin{aligned}
			E_t&:=\big(K_tH_t-Z_t\big),\\
			\Sigma_t&:=\mathcal{G}_{t+1}(\Sigma).
		\end{aligned}
	\end{equation}
\end{theorem}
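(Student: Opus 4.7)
The plan is to reduce $f(\mathbf{K})$ to a deterministic trace, isolate the dependence on $K_t$ alone, and differentiate a quadratic form in $K_t$. First I would exchange expectation and trace to write
\[
f(\mathbf{K})=\sum_{s=0}^{M-1} tr(P_{s+1}\Sigma),
\]
where $P_s=\mathbb{E}[(x_s-\hat{x}_s)(x_s-\hat{x}_s)^T]$ obeys the Lyapunov-type recursion \eqref{eqly}. Only the summands with $s\ge t$ depend on $K_t$, and since $A_s=A-K_sCA$ is free of $K_t$ for $s>t$, the variable $K_t$ enters the sum only through $P_{t+1}$.

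Second I would unroll \eqref{eqly} forward from $P_{t+1}$: for every $s\ge t$,
\[
P_{s+1} = \Big(\prod_{j=t+1}^{s} A_j\Big)\, P_{t+1}\, \Big(\prod_{j=t+1}^{s} A_j\Big)^T + N_{t,s},
\]
where $N_{t,s}$ collects the $K_t$-independent noise contributions (the empty-product convention covers $s=t$). Applying the cyclic property of trace and summing over $s$, this would give
\[
\sum_{s=t}^{M-1} tr(P_{s+1}\Sigma) = tr\Big(P_{t+1}\Big[\Sigma + \sum_{s=t+1}^{M-1} A_{t+1}^T\cdots A_s^T\, \Sigma\, A_s\cdots A_{t+1}\Big]\Big) + C_t,
\]
with $C_t$ free of $K_t$. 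Reindexing $i=M-s-1$ in the inner sum and comparing with \eqref{defFGD} identifies the bracket as $\mathcal{G}_{t+1}(\Sigma)=\Sigma_t$. Consequently $f(\mathbf{K}) = tr(P_{t+1}\Sigma_t) + C_t$ up to a $K_t$-free constant.

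Third I would substitute the one-step update $P_{t+1} = A_tP_tA_t^T + (I-K_tC)Q_t(I-K_tC)^T + K_tR_{t+1}K_t^T$ with $A_t=A-K_tCA$ (noting $P_t$ does not depend on $K_t$), expand as a quadratic in $K_t$, and differentiate termwise with the identities $\nabla_K tr(MKB) = M^TB^T$ and $\nabla_K tr(KAK^TB) = 2BKA$ for symmetric $A,B$. The linear-in-$K_t$ contribution assembles into $-2\Sigma_t(AP_t(CA)^T+Q_tC^T) = -2\Sigma_tZ_t$ and the quadratic one into $2\Sigma_tK_t(CAP_t(CA)^T+CQ_tC^T+R_{t+1}) = 2\Sigma_tK_tH_t$, yielding $\nabla_t f(\mathbf{K}) = 2\Sigma_t(K_tH_t-Z_t) = 2\Sigma_tE_t$. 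The main obstacle is the second step: the reversed-index conventions in the definitions of $\mathcal{F}_t,\mathcal{G}_t,\mathcal{D}_{t,s}$ (indexed through $A_{M-j-1}$) must be carefully reconciled with the forward-time propagation of $P_s$ through $A_{t+1},A_{t+2},\dots$; once the identity $\sum_{s=t}^{M-1} tr(P_{s+1}\Sigma) = tr(P_{t+1}\Sigma_t)+C_t$ is established, step three is a routine symmetric matrix-calculus exercise.
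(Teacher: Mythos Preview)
Your argument is correct and arrives at the same intermediate identity as the paper, namely that $f(\mathbf{K})=tr(P_{t+1}\Sigma_t)+C_t$ with $C_t$ free of $K_t$, after which the differentiation in your Step~3 is routine and matches the paper's.

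The route, however, is different. The paper does not unroll the primal recursion \eqref{eqly}; instead it invokes the dual representation from Theorem~\ref{thdual}, rewrites $f(\mathbf{K})$ via the dual states $s_i$ as in \eqref{f(K)expansiont}--\eqref{f(k)expansion}, and observes that in the expansion at level $t+1$ the only $K_t$-dependent term is $\mathbb{E}\big[s_{M-t-1}^TP_{t+1}s_{M-t-1}\big]=tr\big(P_{t+1}\,\mathbb{E}[s_{M-t-1}s_{M-t-1}^T]\big)$. The paper then computes $\mathbb{E}[s_{M-t-1}s_{M-t-1}^T]=\mathcal{G}_{t+1}(\Sigma)=\Sigma_t$ directly from the dual dynamics (see \eqref{eq7}). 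Your approach is more elementary and self-contained: it avoids the dual system entirely and obtains $\Sigma_t$ by forward-propagating $P_{s+1}$ and collecting the coefficient of $P_{t+1}$. What you gain is simplicity; what the paper's route buys is the interpretation of $\Sigma_t$ as the dual second moment, which is reused later to derive the bound \eqref{bdsigma}.
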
 
\begin{proof}
	We first calculate $\nabla_0 f(\mathbf{K})$. According to Theorem \ref{thdual}, $f(\mathbf{K})$ is described as
	\begin{align}\label{f}
		f(\mathbf{K})=\mathbb{E}\Bigg[ \sum_{t=0}^{M-1} s_t^TQ_{M-t-1}s_t+u_t^T(CQ_{M-t-1}C^T+R_{M-t})u_t{+}2u_t^TCQ_{{M-t-1}}s_t +s_M^TP_0s_M\Bigg],
	\end{align}
	where $s_t,u_t, t\in\mathbb{T}$ are defined in Problem 3, and hence $f(\mathbf{K})$ can be transformed into
	\begin{align}
		f(\mathbf{K})
		=&\mathbb{E}\Bigg[\sum_{t=0}^{M-1} s_t^TQ_{M-t-1}s_t+u_t^T(CQ_{M-t-1}C^T+R_{M-t})u_t{+}2u_t^TCQ_{M-t-1}s_t\notag \\&+s_{M-1}^TA_0P_{0}A_0^Ts_{M-1}
		+z_{M-1}^TP_{0}z_{M-1}\Bigg]\notag\\
		=&\mathbb{E}\Bigg[\sum_{t=0}^{M-2} s_t^TQ_{M-t-1}s_t+u_t^T(CQ_{M-t-1}C^T+R_{M-t})u_t{+}2u_t^TCQ_{M-t-1}s_t\Bigg]\notag \\
		&+\mathbb{E}\Bigg[(s_{M-1}^TP_1s_{M-1})
		+z_{M-1}^TP_{0}z_{M-1}\Bigg]\label{P0}
	\end{align}
	with $z_t, t\in\mathbb{T}$ given in Problem 3. Note that the last equality of \eqref{P0} is by \eqref{eqly}.  As only $\mathbb{E}\big[s_{M-1}^TP_1s_{M-1}\big]$ contains $K_0$,  $\nabla_0 f(\mathbf{K})$ is represented as
	\begin{align*}
		\nabla_0 f(\mathbf{K})=\nabla_0 \mathbb{E}\big[s_{M-1}^TP_1s_{M-1}\big]
		=\nabla_0 tr\big(P_1\mathbb{E}\big[s_{M-1}s_{M-1}^T\big]\big),
	\end{align*}
	where 
	\begin{align}
		\mathbb{E}\big[s_{M-1}s_{M-1}^T\big]=&\mathbb{E}\Bigg[ \Bigg(\sum_{t=0}^{M-3}\prod_{i=t+1}^{M-2}A_{M-i-1}^Tz_t+z_{M-2}+\prod_{t=0}^{M-2}A_{M-t-1}^Ts_0\Bigg)\notag\\
		&\Bigg(\sum_{t=0}^{M-3}\prod_{i=t+1}^{M-2}A_{M-i-1}^Tz_t+z_{M-2}+\prod_{t=0}^{M-2}A_{M-t-1}^Ts_0\Bigg)^T\Bigg]\notag\\
		=&\mathbb{E}\Bigg[\sum_{t=0}^{M-3}\mathcal{D}_{1,t+1}(z_tz_t^T)+z_{M-2}z_{M-2}^T+\mathcal{D}_{1,0}(s_0s_0^T)\Bigg]\notag\\
		=&\mathcal{G}_1(\Sigma)\notag\\
		=&\Sigma_0. \label{eq7}
	\end{align}
	As $\Sigma_0$ has nothing to do with $K_0$, substituting $P_1$ into $P_t$ of \eqref{eqly} leads to 
		\begin{align*}
		\nabla_0 f(\mathbf{K}) =& 2\Sigma_0(K_0R_1-(I-K_0C)Q_0C^T-A_0P_0(CA)^T)\\
		=&2\Sigma_0E_0.
	\end{align*}
	According to \eqref{eqly}, $f(\mathbf{K})$ can also be rewritten as
	\begin{equation}\label{f(K)expansiont}
		\begin{aligned}
			f(\mathbf{K})=&\mathbb{E}\Bigg[\sum_{i=0}^{M-t-1} s_i^TQ_{M-i-1}s_i+u_i^T(CQ_{M-i-1}C^T+R_{M-i})u_i{+}2u_i^TCQ_{M-i-1}s_i \\
			&+s_{M-t}^TP_{t}s_{M-t}+\sum_{i=M-t}^{M-1}z_iP_{M-i-1}z_i\Bigg],~~~
			t=2,\dots,M-1,\\
		\end{aligned}
	\end{equation}
	and
	\begin{equation}
		f(\mathbf{K})=\mathbb{E}\Bigg[s_{0}^TP_{M}s_{0}+\sum_{i=0}^{M-1}z_iP_{M-i-1}z_i\Bigg].\label{f(k)expansion}
	\end{equation}
	In \eqref{f(K)expansiont} and \eqref{f(k)expansion}, only $s_{M-t}^TP_{t}s_{M-t}$ contains $K_t$ and we have 
	\begin{align*}
		\nabla_t f(\mathbf{K})=&\nabla_t \mathbb{E}[s_{M-t-1}^TP_{t+1}s_{M-t-1}]\\
		=&2\Sigma_tE_t,{	~~~t\in\mathbb{T}}.
	\end{align*}
	in a similar way.
\end{proof}

Using \eqref{f(k)expansion}, we get the upper bound of $||P_t||$
\begin{equation}\label{ieqp}
	||P_t||\leq\sum_{i=0}^{M}||P_i||\leq \frac{f(\mathbf{K})}{\sigma_{\min}^{\Sigma}}+||P_0||,~~ t\in\mathbb{T}. 
\end{equation}
 According to the definition of $s_{t},t=0,\dots,M$, we have
\begin{equation}\label{ieqsigma_}
	\begin{aligned}
		f(\mathbf{K})=&\mathbb{E}\Bigg[\sum_{i=0}^{M-t-2} s_i^TQ_{M-i-1}s_i+u_i^T(CQ_{M-i-1}C^T+R_{M-i})u_i\textcolor{red}{+}2u_i^TCQ_{M-i-1}s_i \\
		&+u_{M-t-1}^TR_{t+1}u_{M-t-1}+(s_{M-t}-z_{M-t-1})^TA^{-T}Q_tA^{-1}(s_{M-t}-z_{M-t-1})\\
		&+s_{M-t}^TP_{t}s_{M-t}+\sum_{i=M-t}^{M-1}z_iP_{M-i-1}z_i\Bigg]
	\end{aligned}
\end{equation} 
with $t\in\mathbb{T}.$
We obtain the upper bound of $\Sigma_t, t\in\mathbb{T}$ by \eqref{ieqsigma_}
\begin{equation}\label{bdsigma}
	||\Sigma_t||\leq tr(\Sigma_t)\leq \frac{f(\mathbf{K})}{\sigma_{\min}^{\mathbf{A^{-T}QA^{-1}}}}+N\sigma_{\max}^{\Sigma},
\end{equation}
where $\sigma_{\min}^{\mathbf{A^{-T}QA^{-1}}}$ denotes $\min_{t\in\mathbb{T}}\{\sigma_{\min}^{A^{-T}Q_tA^{-1}}\}$. Introduce the notations
$\mathcal{A}(\mathbf{K}):=\frac{f(\mathbf{K})}{\sigma_{\min}^{\Sigma}}+||P_0||$ and $\mathcal{B}(\mathbf{K}):=\frac{f(\mathbf{K})}{\sigma_{\min}^{\mathbf{A^{-T}QA^{-1}}}}+N\sigma_{\max}^{\Sigma}$ to simplify the subsequent proof.

By Theorem \ref{thg} and the following assumption about system \eqref{eq0000}, we can prove that the optimal solution of Problem 2 satisfies equations \eqref{eq1} and \eqref{eq2}; this means the optimal solution of Problem 2 is equal to Kalman gain $\{K_t^*\}_{t=0}^{M-1}$. 
\begin{assumption}\label{as1}
 The pair $(C,A)$ is observable.
\end{assumption}

\begin{assumption}\label{as3}

Matrix $A$ is invertible.
\end{assumption}

\begin{corollary}\label{eq_op}
	Under assumptions \ref{assumtion-1}-\ref{as3}, the optimal solution of Problem 2 is equal to $\{K_t^*\}_{t=0}^{M-1}$.
\end{corollary}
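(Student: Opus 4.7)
By Theorem \ref{thg}, any global minimizer $\mathbf{K}$ of Problem 2 must satisfy the first-order condition $\nabla_t f(\mathbf{K}) = 2\Sigma_t E_t = 0$ for every $t\in\mathbb{T}$. My plan is to show that $\Sigma_t$ and $H_t$ are both positive definite, so that this condition reduces first to $E_t = K_t H_t - Z_t = 0$ and then to $K_t = Z_t H_t^{-1}$, which is precisely the Kalman-gain formula \eqref{eq2}. I will then identify the resulting coupled system with the Kalman recursion by forward induction in time, and finally invoke existence of a minimizer to conclude.

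The main step is $\Sigma_t\succ 0$, and this is where Assumptions \ref{as1} and \ref{as3} both enter. I would factor
\begin{equation}
\nonumber
\Sigma \;=\; \sum_{n=1}^{N}(CA^{n})^T CA^{n} \;=\; A^T\Bigl(\sum_{n=0}^{N-1}(CA^{n})^T CA^{n}\Bigr) A,
\end{equation}
note that the inner Gramian is positive definite by observability of $(C,A)$, and combine with invertibility of $A$ to obtain $\Sigma\succ 0$. Since $\Sigma_t = \mathcal{G}_{t+1}(\Sigma) = \Sigma + \sum_{i=0}^{M-t-2}\mathcal{D}_{t+1,i}(\Sigma)$ and each $\mathcal{D}_{t+1,i}(\Sigma)$ has the form $B^T\Sigma B\succeq 0$, this yields $\Sigma_t\succeq\Sigma\succ 0$. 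Positive definiteness of $H_t = CAP_t(CA)^T + R_{t+1} + CQ_tC^T$ is immediate from Assumption \ref{assumtion-1}, since $R_{t+1}\succ 0$ and the remaining summands are positive semidefinite.

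Once $K_t = Z_t H_t^{-1}$ is in hand, the identification with $K_t^*$ proceeds by forward induction on $t$. The key observation is that $P_t$, defined by the Lyapunov recursion \eqref{eqly}, depends only on $K_0,\dots,K_{t-1}$ and the given $P_0$; hence $Z_t$ and $H_t$ do too. The stationarity system therefore decouples in time: $P_0$ determines $K_0 = Z_0 H_0^{-1}$, then \eqref{eqly} gives $P_1$, then $K_1 = Z_1 H_1^{-1}$, and so on. This is precisely the Kalman recursion \eqref{eq1}-\eqref{eq2}, so $K_t = K_t^*$ for all $t\in\mathbb{T}$.

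Existence of a global minimizer follows because $f(\mathbf{K}) = \sum_{t=0}^{M-1} tr(P_{t+1}\Sigma)$ is continuous (polynomial in $\mathbf{K}$) and coercive: one has $P_{t+1}\succeq K_t R_{t+1} K_t^T$, which yields $f(\mathbf{K}) \geq \sigma_{\min}^{\Sigma}\sigma_{\min}^{R_{t+1}}||K_t||_F^2$ for every $t$ and hence $f(\mathbf{K})\to\infty$ as $||\mathbf{K}||_F\to\infty$. Thus a minimizer exists, must satisfy the first-order condition, and by the previous paragraph coincides with $\{K_t^*\}_{t=0}^{M-1}$. The principal obstacle is the combined observability/invertibility argument for $\Sigma\succ 0$: without it, $\nabla_t f(\mathbf{K})=0$ alone would not pin down $K_t$, and the correspondence with the Kalman recursion would break down.
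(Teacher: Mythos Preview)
Your proof is correct and follows essentially the same route as the paper: use the first-order condition $2\Sigma_t E_t=0$ from Theorem~\ref{thg}, argue that $\Sigma_t$ is invertible under Assumptions~\ref{as1}--\ref{as3} so that $K_t=Z_tH_t^{-1}$, and then verify that the resulting $P_t$ recursion coincides with \eqref{eq1}. You supply more detail than the paper in two places---the explicit factorization $\Sigma=A^T(\sum_{n=0}^{N-1}(CA^n)^TCA^n)A$ showing $\Sigma\succ0$, and the coercivity argument for existence of a minimizer---both of which the paper leaves implicit.
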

\begin{proof}
Let $\mathbf{K}^\circ$ be the optimal solution of Problem 2. Substituting \eqref{eqg} into $\nabla_t f(\mathbf{K}^\circ)=0,\ t\in\mathbb{T}$, we have
	\begin{align*}
		2\Sigma_t(K_tH_t-Z_t)=0.
	\end{align*}
	Matrix $\Sigma_t$ is invertible under Assumption \ref{as1} and \ref{as3}. Therefore, $E_t=K_tH_t-Z_t=0$. Due to Assumption \ref{assumtion-1}, we have
	\begin{align}
		K_t=Z_tH_t^{-1}.\label{K}
	\end{align}
	Taking \eqref{K} into \eqref{eqly}, $P_t,\ t\in\mathbb{T}$ satisfy
	\begin{align*}
		P_{t+1}=&A_{t}P_{t}A_{t}^T+(I-K_{t}C)Q_{t}(I-K_{t}C)^T+K_{t}R_{t+1}K_{t}^T\notag\\
		=&AP_{t}A^T+Q_{t}-2Z_{t}H_{t}^{-1}Z_{t}+Z_{t}H_{t}^{-1}H_{t}H_{t}^{-1}Z_{t}^T\notag\\
		=&AP_{t}A^T+Q_{t}-Z_{t}H_{t}^{-1}Z_{t}^T,
	\end{align*}
which means $P_t=P_t^*, t=0,\dots,M$. That completes the proof.
\end{proof}

Since Problem 1 is equal to Problem 2, the equivalence between Problem 1 and the finite-horizon KF problem is proved by Corollary \ref{eq_op}.
For Problem 1, we propose a SGD method in Section \ref{sgdconv} to solve it. To analyze the convergence of SGD method, we first analyze the global linear convergence guarantee of exact GD method in Section \ref{eq_conv}.

\subsection{Discussion on Assumption 3}

Assumption \ref{as3} might be too strong, and it means that the analysis of our method can be applied to the case that the system matrix is invertible. In fact, our initial idea is to solve the following optimization problem instead of Problem 1.
	
	$\mathbf{Problem\ 4.}$ Find optimal $\mathbf{K}$ such that 
	\begin{equation}
		\begin{aligned}
			&\min_{\mathbf{K}\in\mathbb{R}^{m\times MN}} \mathbb{E} \Bigg[tr\Bigg(\sum_{t=0}^{M-1}  (x_{t+1}-\hat{x}_{t+1})(x_{t+1}-\hat{x}_{t+1})^T\Sigma_{initial}\Bigg)\Bigg]\\
			&~~~~\,\,\,s.t.~~~\left\{
			\begin{array}{l}
				x_{t+1}=Ax_t+\omega_t,\\
				y_{t+1}=Cx_{t+1}+v_{t+1},\\
				\hat{x}_{t+1}=A\hat{x}_{t}+K_tCA(x_t-\hat{x}_t)+K_tC\omega_t+K_tv_{t+1}, 
			\end{array}
			\right.		\label{pro3_init}
		\end{aligned}
	\end{equation}
where $\Sigma_{initial}=\sum_{n=0}^{N-1}(CA^{n})^TCA^{n}$. 

Note that there is only a little difference between $\Sigma$ and $\Sigma_{initial}$, and hence the analysis of Problem 2, such as Theorem \ref{thg}, can be easily extended to the setting for  Problem 4. However, it is not easy to rewrite the optimization function of \eqref{pro3_init} by the observations of system \eqref{eq0000} indeed. Specifically, optimization function of Problem 4 is the sum of 
\begin{equation}
	tr\big(\mathbb{E}[(CA^nx_{t+1}-CA^n\hat{x}_{t+1}) (CA^nx_{t+1}-CA^n\hat{x}_{t+1})^T]\big), ~~t\in\mathbb{T}, ~~n=0,\dots,N-1.
\end{equation}
 Using equations \eqref{KFy} and \eqref{dey}, we have
\begin{equation}
	\begin{aligned}
		&\mathbb{E}[(y_{t+n+1}-\hat{y}_{t+1}^n)^T(y_{t+n+1}-\hat{y}_{t+1}^n)]\\
		&=tr\big(\mathbb{E}[(x_{t+1}
		-\hat{x}_{t+1})(x_{t+1}-\hat{x}_{t+1})^T((CA^n)^TCA^n)]\\
		&\hphantom{=}-2\mathbb{E}[v_{t+n+1}\hat{x}_{t+1}^T]+\mathbb{E}[v_{t+n+1}v_{t+n+1}^T]\big),~~~t\in\mathbb{T},n=1,\dots,N-1.
	\end{aligned}
\end{equation}
As $v_{t+n+1}$ is linear independent with $\hat{x}_{t+1}$, we have that $\mathbb{E}[v_{t+n+1}\hat{x}_{t+1}]=0$ and  $\mathbb{E}[v_{t+n+1}v_{t+n+1}^T]$ is a constant. Therefore, the optimal solution of Problem 4 does not change if we  replace $tr\big(\mathbb{E}[(CA^nx_{t+1}-CA^n\hat{x}_{t+1})$ $(CA^nx_{t+1}-CA^n\hat{x}_{t+1})^T]\big), t\in\mathbb{T}, n=1,\dots,N-1$ by $tr\big(\mathbb{E}[(y_{t+n+1}-\hat{y}_{t+n+1}^n)^T(y_{t+n+1}-\hat{y}_{t+1}^n)]\big), t\in\mathbb{T}, n=1,\dots,N-1$. On the other hand,   
\begin{equation}
	\begin{aligned}
		\mathbb{E}[(y_{t+1}-\hat{y}_{t+1}^0)^T(y_{t+1}-\hat{y}_{t+1}^0)]=&tr\big(\mathbb{E}[(x_{t+1}
		-\hat{x}_{t+1})(x_{t+1}-\hat{x}_{t+1})^T(C^TC)]\\
		&-2\mathbb{E}[\hat{x}_{t+1}v_{t+1}^T]+\mathbb{E}[v_{t+1}v_{t+1}^T]\big),
	\end{aligned}
\end{equation}
and $\mathbb{E}[\hat{x}_{t+1}v_{t+1}^T]=K_tR_{t+1}$ is not equal to zero; if we further replace $tr\big(\mathbb{E}[(x_{t+1}
-\hat{x}_{t+1})(x_{t+1}-\hat{x}_{t+1})^T(C^TC)]\big)$ by $tr\big(\mathbb{E}[(y_{t+1}-\hat{y}_{t+1}^0)^T(y_{t+1}-\hat{y}_{t+1}^0)]\big)$, the superfluous term $K_tR_{t+1}$ will change the optimal solution of Problem 4, which will be no longer the Kalman gain $\{K_t^*\}_{t=0}^{M-1}$.
	
Interestingly, we do not need Assumption \ref{as3} in the following special condition. If $R_{t}\equiv R, t\in\mathbb{N}$ and $P_0=0$ (i.e., $x_0=\hat{x}_0$), then we can have the estimation $\hat{R}=\frac{1}{L}\sum_{l=1}^{L}v_0(l)v_0^T(l)$ of covariance matrix $R$ of $v_t, t\in\mathbb{N}$, by collecting the samples of measurement noise $v_0(l)=y_0(l)-Cx_0, l=1,\dots,L$.
Therefore, we can replace $tr\big(\mathbb{E}[(x_{t+1}
-\hat{x}_{t+1})(x_{t+1}-\hat{x}_{t+1})^T(C^TC)]\big)$ in Problem 4 by $tr(\mathbb{E}[(y_{t+1}-\hat{y}_{t+1}^0)^T(y_{t+1}-\hat{y}_{t+1}^0)]+2K_t\hat{R})$ without changing the optimal solution of Problem 4, and hence we can propose a learning method like Algorithm 1 below for Problem 4 and analyze the sample complexity and convergence of the new algorithm as the same as we will do in Section \ref{sgdconv}.

Indeed, Assumption \ref{as3} has been proposed in the classical Mehra method \cite{1100100}, which is used to estimate the noise covariance. The necessity of this condition has yet to be proved \cite{ODELSON2006303}, and whether the Kalman gain can be obtained in the setting with singular $A$ has  been not addressed. In the near future, we might explore how to weaken this assumption in the perspective of optimization.

\section{Exact GD method}\label{eq_conv}

{In this section, we first review some properties of $f(\mathbf{K})$ and analyze the convergence of exact GD method. Though the model of our optimization problem is different from the ones of existing literature, the proofs of this section are essentially same to those of \cite{doi:10.1137/20M1382386} due to the relationship between Kalman filter and Kalman predictor. 
Just for the completeness, we present some concise proofs of the concerned results.}

Suppose that covariance matrices $P_0$, $\{Q_t\}_{t\in\mathbb{T}},$ $\{R_{t+1}\}_{t\in\mathbb{T}}$ and  model parameters $A, C$ are all known, and in this section we will analyze the convergence of exact GD method for Problem 2. To simplify the subsequent proofs, we introduce the following notations. Let $\mathbf{K}'=(K_0',K_1',\dots,K_{M-1}')$ be any gain and $\mathbf{K}^*=(K_0^*,K_1^*,\dots,K_{M-1}^*)$ be the optimal solution of Problem 2, and $A_t',P_{t+1}', \Sigma_t', E_t', A_t^*, \Sigma_t^*, E_t^*, t\in\mathbb{T}$ are defined in the same way as $A_t, P_{t+1}, \Sigma_t, E_t$, where we just replace $K_t$ by $K_t'$ and $K_t^*$, respectively. Let 
$\{s_t'\}$ and $\{u_t'\}$ denote the state sequence and control sequence generated by policy $\mathbf{K}'$, and further let  $\sigma_{\max}^\mathbf{CQC^T+R}$, $\sigma_{\min}^\mathbf{R}$ denote  $\max_{t\in\mathbb{T}}\{\sigma_{\max}^{CQ_tC^T+R_{t+1}}\}$ and  $\min_{t\in\mathbb{T}}\{\sigma_{\min}^{R_{t+1}}\}$, respectively.  Denote
\begin{equation}
	\begin{aligned}
		&\Lambda_{t}:=(CQ_{t}C^T)+R_{t+1}+CAP_{t}(CA)^T,\\
		&\delta K_{t}':=K_{t}'-K_{t},~~~t\in\mathbb{T},\\
		&||\Lambda_{\max}||:=\sigma_{\max}^\mathbf{CQC^T+R}+||CA||^2\mathcal{A}(\mathbf{K}).\label{ieLambda}
	\end{aligned}
\end{equation}
 
%\subsection{The convexity and smoothness of $f(\mathbf{K})$}

The Polyak-Lojasiewicz (PL) condition is key to analyze the convergence of exact GD method, and it can be deduced in several  different versions of LQR problem  \cite{pmlr-v80-fazel18a,doi:10.1137/20M1382386,9130755,10156641,Talebi2023DatadrivenOF}. Lemma \ref{lpl} and Lemma \ref{lsmooth} below are simple extensions of Lemma 3.6 and Lemma 3.7 of  \cite{doi:10.1137/20M1382386}, which shows that $f(\mathbf{K})$ also has the PL condition and almost smoothness. Note that the expression (\ref{f}) of $f(\mathbf{K})$ contains a cross term between $u_t$ and $s_t$; this makes the coefficients $c_1, c_2$ of (\ref{ieq00}) (\ref{ieq000}) slightly different from those in \cite{doi:10.1137/20M1382386}. For the completeness, this paper presents a concise proof of Lemma \ref{lpl}, which is same to that of Lemma 3.6 of \cite{doi:10.1137/20M1382386}.

\begin{lemma}\label{lpl}
	Under assumptions \ref{assumtion-1}-\ref{as3} and for any  $\mathbf{K}\in\mathbb{R}^{N\times mM}$, $f(\mathbf{K})$ satisfies the following inequalities:
	\begin{align}
		f(\mathbf{K})-f(\mathbf{K}^*)&\leq c_1tr\Bigg(\sum_{t=0}^{M-1}\nabla_t f(\mathbf{K})\nabla_t f(\mathbf{K})^T\Bigg)\label{ieq00}, \\
		f(\mathbf{K})-f(\mathbf{K}^*)&\geq c_2tr\Bigg(\sum_{t=0}^{M-1}E_tE_t^T\Bigg),\label{ieq000}
	\end{align}
where $c_1=\frac{\mathcal{B}(\mathbf{K}^*)}{4\sigma_{\min}^R(\sigma_{\min}^\Sigma)^2}$ and $c_2=\frac{\sigma_{\min}^\Sigma}{4(\sigma_{\max}^\mathbf{CQC^T+R}+||CA||^2\mathcal{A}(\mathbf{K}))}$.
\end{lemma}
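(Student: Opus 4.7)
The plan is to mimic the LQR policy-gradient analysis (Fazel et al., Hambly et al.) after transferring it to the dual system of Problem 3. The centerpiece is a one-step \emph{cost-difference identity}: for any two policies $\mathbf{K}$ and $\mathbf{K}'$,
\[
f(\mathbf{K}')-f(\mathbf{K}) \;=\; \sum_{\tau=0}^{M-1}tr\!\Big(\Sigma'_\tau\big[\,(K'_\tau-K_\tau)E_\tau^T + E_\tau(K'_\tau-K_\tau)^T + (K'_\tau-K_\tau)H_\tau(K'_\tau-K_\tau)^T\,\big]\Big),
\]
where $\Sigma'_\tau$ is the dual-state covariance under $\mathbf{K}'$ and $E_\tau,H_\tau$ are the quantities at $\mathbf{K}$. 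I would derive this by running the value-function recursion $V_t(s)=s^TP_{t+1}s$ associated with $\mathbf{K}$ along trajectories produced by $\mathbf{K}'$, then telescoping; the per-step advantage falls out by expanding $s_{t+1}^TP_\cdot s_{t+1}$ with $u_t=-(K'_{M-t-1})^Ts_t$ and grouping terms via \eqref{eqly} and the definitions of $H_\tau$, $Z_\tau$, $E_\tau$.

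For the upper bound \eqref{ieq00}, I would set $\mathbf{K}'=\mathbf{K}^*$ and complete the square in $\delta K := K^*_\tau-K_\tau$: the quadratic-in-$\delta K$ expression inside each trace attains its (matrix) minimum value $-E_\tau H_\tau^{-1}E_\tau^T$, so
\[
f(\mathbf{K})-f(\mathbf{K}^*)\;\leq\;\sum_\tau tr(\Sigma^*_\tau E_\tau H_\tau^{-1}E_\tau^T).
\]
Bounding $\|\Sigma^*_\tau\|\leq\mathcal{B}(\mathbf{K}^*)$ via \eqref{bdsigma}, using $H_\tau\succeq R_{\tau+1}\succeq\sigma_{\min}^R I$, and then converting $\sum_\tau tr(E_\tau E_\tau^T)$ to $\sum_\tau tr(\nabla_\tau f\,\nabla_\tau f^T)$ through the identity $\nabla_\tau f=2\Sigma_\tau E_\tau$ together with $\Sigma_\tau\succeq\Sigma\succeq\sigma_{\min}^\Sigma I$ produces exactly the constant $c_1 = \mathcal{B}(\mathbf{K}^*)/\bigl(4\sigma_{\min}^R(\sigma_{\min}^\Sigma)^2\bigr)$.

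For the lower bound \eqref{ieq000}, I would instead pick the ``one-step policy-improvement'' choice $K'_\tau := K_\tau-E_\tau H_\tau^{-1}$. With this substitution the three summands inside the trace collapse to $-E_\tau H_\tau^{-1}E_\tau^T$, giving
\[
f(\mathbf{K})-f(\mathbf{K}')\;=\;\sum_\tau tr(\Sigma'_\tau E_\tau H_\tau^{-1}E_\tau^T)\;\geq\;\frac{\sigma_{\min}^\Sigma}{\|\Lambda_{\max}\|}\sum_\tau tr(E_\tau E_\tau^T),
\]
using $\Sigma'_\tau\succeq\Sigma\succeq\sigma_{\min}^\Sigma I$ together with $\|H_\tau\|\leq\|\Lambda_{\max}\|$, which is immediate from \eqref{ieqp}--\eqref{ieLambda}. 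Optimality $f(\mathbf{K}^*)\leq f(\mathbf{K}')$ then transfers the bound to $f(\mathbf{K})-f(\mathbf{K}^*)$, recovering $c_2$ up to the harmless additional factor $1/4$ that the authors retain.

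The main obstacle is the cost-difference identity itself: the cross-term $2u_t^TCQ_{M-t-1}s_t$ in \eqref{f} and the coupling between $P_\tau$, $H_\tau$, $Z_\tau$ must be carefully tracked so that the per-step advantage emerges in the clean bilinear-plus-quadratic form $\delta K\,E^T+E\,\delta K^T+\delta K\,H\,\delta K^T$; any mismatch here would spoil the completion-of-square and Newton-step steps. Once this identity is secured, both inequalities reduce to standard matrix-norm estimates that land on the stated constants.
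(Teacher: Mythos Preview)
Your proposal is correct and follows essentially the same route as the paper: the advantage-function identity you write (in its expanded bilinear-plus-quadratic form) is precisely the paper's equation \eqref{eq8} once one notes $H_\tau=\Lambda_\tau$, and both bounds are obtained by the same two substitutions ($\mathbf{K}'=\mathbf{K}^*$ for \eqref{ieq00} and the one-step improvement $K'_\tau=K_\tau-E_\tau\Lambda_\tau^{-1}$ for \eqref{ieq000}) followed by the same matrix-norm estimates. Your observation about the extra factor $1/4$ in $c_2$ is also accurate---the paper simply retains a looser constant than the argument actually yields.
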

\begin{proof} The advantage function takes the form 
	\begin{align}
			A(\mathbf{K}',\mathbf{K})=&f(\mathbf{K}')-f(\mathbf{K})\notag\\
			%=&\mathbb{E}\Bigg[\sum_{t=0}^{M-1} s_t'^TQ_{M-t-1}s_t'+u_t'^T(CQ_{M-t-1}C^T+R_{M-t})u_t'-2u_t'^TCQ_{M-t-1}s_t' \notag\\
			%&+s_t'P_{M-t}s_t'
			%-s_t'P_{M-t}s_t'
			%+s_M'^TP_0s_M'\Bigg]-\mathbb{E}\Bigg[s_0^TP_{M}s_0+\sum_{t=0}^{M-1}z_tP_{M-t-1}z_t^T\Bigg]\notag\\
			=&\mathbb{E}\Bigg[\sum_{t=0}^{M-1} s_t'^TQ_{M-t-1}s_t'+u_t'^T(CQ_{M-t-1}C^T+R_{M-t})u_t'{+}2u_t'^TCQ_{M-t-1}s_t' \notag\\
			&+s_{t+1}'P_{M-t-1}s_{t+1}'
			-s_t'P_{M-t}s_t'-z_tP_{M-t-1}z_t^T\Bigg]\notag\\
%			=& tr\Bigg(2\sum_{t=0}^{M-1}\delta K_{M-t-1}'^T\Sigma_{M-t-1}' E_{M-t-1}
%			+\sum_{t=0}^{M-1} \Sigma_{M-t-1}'(\delta K_{M-t-1}'\Lambda_{M-t-1}\delta K_{M-t-1}'^T)\Bigg)\notag\\
%			%=& tr\Bigg(2\sum_{t=0}^{M-1}\delta K_{t}'^T\Sigma_{t}' E_{t}
%			%+\sum_{t=0}^{M-1} \Sigma_{t}'(\delta K_{t}'\Lambda_{t}\delta K_{t}'^T)\Bigg)\notag\\
			=& tr\Bigg(\sum_{t=0}^{M-1}\Sigma_{t}'(E_{t}\Lambda_{t}^{-1}+\delta K_{t}')\Lambda_{t}(E_{t}\Lambda_{t}^{-1}+\delta K_{t}')^T
			-\Sigma_{t}'E_{t}\Lambda_{t}^{-1}(E_{t})^T\Bigg). 	\label{eq8}	
	\end{align}
%	Using the same method in \cite{doi:10.1137/20M1382386,9130755} to completes the rest proof. 
Substituting ${\mathbf{K}' = \mathbf{K}^*}$ into \eqref{eq8}, we obtain
	\begin{align}
		A(\mathbf{K}^*,\mathbf{K})
%		=& tr\Bigg(\sum_{t=0}^{M-1}\Sigma_{t}^*(E_{t}\Lambda_{t}^{-1}+\delta K_{t}^*)\Lambda_{t}(E_{t}\Lambda_{t}^{-1}+\delta K_{t}^*)^T
%		-\Sigma_{t}^*E_{t}\Lambda_{t}^{-1}(E_{t})^T\Bigg)\notag\\
		&\geq -tr\Bigg(\sum_{t=0}^{M-1}\Sigma_{t}^*E_{t}\Lambda_{t}^{-1}(E_{t})^T\Bigg).\label{ieq0}
	\end{align}
	Dividing both sides of \eqref{ieq0} by -1 and using inequalities $||\Lambda_{t}^{-1}||\leq \frac{1}{\sigma_{\min}^\mathbf{R}}$, $||\Sigma_{t}^{-1}||\leq \frac{1}{\sigma_{\min}^\Sigma}$, we have 
	\begin{align*}
		f(\mathbf{K})-f(\mathbf{K}^*)\leq \sum_{t=0}^{M-1}\frac{||\Sigma_{t}^*||}{4\sigma_{\min}^\mathbf{R}(\sigma_{\min}^\Sigma)^2}||\nabla_tf(\mathbf{K})||_F^2.%\label{pl}
	\end{align*}
According to \eqref{bdsigma}, $c_1=\frac{\mathcal{B}(\mathbf{K}^*)}{4\sigma_{\min}^R(\sigma_{\min}^\Sigma)^2}\geq\frac{||\Sigma_{t}^*||}{4\sigma_{\min}^\mathbf{R}(\sigma_{\min}^\Sigma)^2}$, and hence we get \eqref{ieq00}.
	Similarly, substituting $\hat{K}_t=K_t-E_t\Lambda_t^{-1}$ into $K_t'$ of \eqref{eq8}, it holds 
	\begin{align}
		f(\mathbf{K})-f(\mathbf{K}^*)\geq f(\mathbf{K})-f(\mathbf{\hat{K}})
		%=& tr\Bigg(\sum_{t=0}^{M-1}\hat{\Sigma}_{t}E_{t}\Lambda_{t}^{-1}(E_{t})^T\Bigg)\notag\\
		\geq \sum_{t=0}^{M-1}\frac{\sigma_{\min}^\Sigma}{4||\Lambda_{\max}||}||E_t||_F^2\geq\sum_{t=0}^{M-1}c_2||E_t||_F^2.\label{lowerb}
	\end{align} 
	This completes the proof. 
\end{proof}
	Using Lemma \ref{lpl}, $\sum_{t=0}^{M-1}||K_t||$ can be bounded in the following way
	\begin{align}
			\sum_{t=0}^{M-1}||K_t||&\leq \sum_{t=0}^{M-1}\Bigg( \frac{||(K_tH_t-Z_t)||+||Z_t||}{\sigma_{\min}^\mathbf{R}}\Bigg)\notag\\
			%&\leq \sum_{t=0}^{M-1}\Bigg(\frac{||E_t||}{\sigma_{\min}^\mathbf{R}}+\frac{||Z_t||}{\sigma_{\min}^\mathbf{R}}\Bigg)\notag\\
%			&\leq  \frac{\sqrt{Mc_2^{-1}(f(\mathbf{K})-f(\mathbf{K}^*))}}{\sigma_{\min}^\mathbf{R}}+\sum_{t=0}^{M-1}\frac{||Z_t||}{\sigma_{\min}^\mathbf{R}}\notag\\
			&\leq
			\frac{\sqrt{Mc_2^{-1}(f(\mathbf{K})-f(\mathbf{K}^*))}+\mathcal{A}(\mathbf{K})||CA||||A||}{\sigma_{\min}^\mathbf{R}}+\sum_{t=0}^{M-1}\frac{||CQ_t||}{\sigma_{\min}^\mathbf{R}}\notag\\
			&:=\mathcal{C}(\mathbf{K}). \label{ieqK}
	\end{align}
%	The third inequality holds by \eqref{ieq00} and $\sum_{t=0}^{M-1}||E_t||\leq \sqrt{M\sum_{t=0}^{M-1}||E_t||_F^2}$, and the forth inequality holds by \eqref{ieqp}. To simplify the subsequent proof, 
%We define $\mathcal{C}(\mathbf{K}):=	\frac{\sqrt{Mc_2^{-1}(f(\mathbf{K})-f(\mathbf{K}^*))}+\mathcal{A}(\mathbf{K})||CA||||A||}{\sigma_{\min}^\mathbf{R}}+\sum_{t=0}^{M-1}\frac{||CQ_t||}{\sigma_{\min}^\mathbf{R}}$.
	%
 According to \eqref{eq8}, we can get the almost smoothness of $f(\mathbf{K}) $.
\begin{lemma}\label{lsmooth}
	%$\forall\ \mathbf{K},\mathbf{K}'\in\mathbb{R}^{m\times MN},$% 
	$f(\mathbf{K})$ and $f(\mathbf{K}')$ satisfies
	\begin{equation}
		f(\mathbf{K}')-f(\mathbf{K})=tr\Bigg(2\sum_{t=0}^{M-1}\Sigma_{t}' E_{t}\delta K_{t}'^T
		+\sum_{t=0}^{M-1} \Sigma_{t}'\delta K_{t}'\Lambda_{t}\delta K_{t}'^T\Bigg).
	\end{equation}
\end{lemma}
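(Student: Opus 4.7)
The plan is to obtain the identity almost for free by expanding the closed-form expression for the advantage function that was already derived as equation~\eqref{eq8} during the proof of Lemma~\ref{lpl}. Recall that
\[
f(\mathbf{K}')-f(\mathbf{K}) \;=\; tr\Bigg(\sum_{t=0}^{M-1}\Sigma_{t}'(E_{t}\Lambda_{t}^{-1}+\delta K_{t}')\Lambda_{t}(E_{t}\Lambda_{t}^{-1}+\delta K_{t}')^T \;-\;\Sigma_{t}'E_{t}\Lambda_{t}^{-1}E_{t}^T\Bigg).
\]
So the only work left is algebraic: multiply out the quadratic form on the right, absorb the $-\Sigma_t'E_t\Lambda_t^{-1}E_t^T$ correction, and collapse the two symmetric cross terms into a single one.

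First I would expand the product $(E_t\Lambda_t^{-1}+\delta K_t')\Lambda_t(E_t\Lambda_t^{-1}+\delta K_t')^T$ into the four pieces
\[
E_t\Lambda_t^{-1}E_t^T \;+\; E_t\,\delta K_t'^T \;+\; \delta K_t'\,E_t^T \;+\; \delta K_t'\,\Lambda_t\,\delta K_t'^T.
\]
After pre-multiplying by $\Sigma_t'$ and taking the trace, the first piece is exactly cancelled by the $-\Sigma_t'E_t\Lambda_t^{-1}E_t^T$ term, leaving the two linear-in-$\delta K_t'$ cross terms together with the quadratic $\Sigma_t'\,\delta K_t'\Lambda_t\,\delta K_t'^T$.

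Next I would use the fact that $\Sigma_t' = \mathcal{G}_{t+1}(\Sigma)$ is symmetric, since $\Sigma=\sum_{n=1}^{N}(CA^{n})^TCA^{n}$ is symmetric and the operators $\mathcal{D}_{\cdot,\cdot}$ (and hence $\mathcal{G}_{t+1}$) preserve symmetry. Combined with the cyclic property of the trace, this gives
\[
tr\bigl(\Sigma_t'\,\delta K_t'\,E_t^T\bigr) \;=\; tr\bigl(E_t^T\Sigma_t'\,\delta K_t'\bigr) \;=\; tr\bigl(\Sigma_t' E_t\,\delta K_t'^T\bigr),
\]
so the two cross terms merge into the single contribution $2\,tr(\Sigma_t' E_t\,\delta K_t'^T)$. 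Summing over $t\in\mathbb{T}$ yields the claimed identity.

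There is no real obstacle here; the conceptual heavy lifting, namely the derivation of the advantage-function representation~\eqref{eq8} via the dual system from Problem~3, has already been carried out inside the proof of Lemma~\ref{lpl}. The only points requiring care are the cancellation of the $E_t\Lambda_t^{-1}E_t^T$ term and the symmetry/cyclicity step that merges the two cross terms; both are routine.
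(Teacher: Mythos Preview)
Your proposal is correct and follows exactly the route the paper intends: the paper simply states that Lemma~\ref{lsmooth} follows ``according to~\eqref{eq8}'' without spelling out the algebra, and you have supplied precisely that expansion and the symmetry/cyclicity step needed to merge the cross terms.
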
 

%\subsection{The Local Lipschitz continuity of $\Sigma_t$}

    Denote $\rho:= \max_{t\in\mathbb{T}}\{\max\{||A_t||+\frac{1}{2}, 1+b\}\}$, $|||\mathbf{K}|||=\sum_{t=0}^{M-1}||K_t||$ and $|||\mathbf{K}-\mathbf{K}'|||=\sum_{t=0}^{M-1}||K_t-K_t'||$, respectively, where $b>0$ is a small constant.
    We now propose the following lemma to characterize the local Lipschitz continuity of $\Sigma_t$,
 
 \begin{lemma}\label{lePSi}
 	Under Assumption \ref{assumtion-1} and condition $||K_{t}-K_{t}'||\leq\frac{1}{2||CA||}, t\in\mathbb{T}$, it holds 
 	\begin{equation}
 		\begin{aligned}
 		     &||\Sigma_t-\Sigma_t'||
 			\leq \big(\frac{\rho^{2M}-1}{\rho^2-1}(2\rho+1)\big)||\Sigma||||CA|||||\mathbf{K}-\mathbf{K}'|||,~~~ t\in\mathbb{T}.\label{ieq3/4}
 		\end{aligned}
 	\end{equation}
 \end{lemma}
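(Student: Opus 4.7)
The strategy is to expand $\Sigma_t - \Sigma_t'$ into a sum of differences of matrix products and bound each by a telescoping estimate. Using the definition $\mathcal{G}_{t+1}(\Sigma) = \Sigma + \sum_{s=0}^{M-t-2}\mathcal{D}_{t+1,s}(\Sigma)$, the ``constant'' term $\Sigma$ cancels in the difference, and with the notation $P_s := A_{t+1}^T A_{t+2}^T \cdots A_{M-s-1}^T$ (and $P_s'$ its primed counterpart), one has $\mathcal{D}_{t+1,s}(\Sigma) = P_s \Sigma P_s^T$, so
\begin{equation*}
\Sigma_t - \Sigma_t' \;=\; \sum_{s=0}^{M-t-2}\bigl[P_s\,\Sigma\, P_s^T - P_s'\,\Sigma\,(P_s')^T\bigr].
\end{equation*}

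Two elementary pointwise estimates then drive the bookkeeping. Since $A_j - A_j' = (K_j' - K_j)CA$, the hypothesis $\|K_j - K_j'\| \leq 1/(2\|CA\|)$ gives $\|A_j - A_j'\| \leq \|CA\|\|K_j - K_j'\| \leq 1/2$, and therefore $\|A_j'\| \leq \|A_j\| + 1/2 \leq \rho$ by the very definition of $\rho$; hence $\max\{\|A_j\|,\|A_j'\|\} \leq \rho$ for every $j \in \mathbb{T}$. Combining this with the standard non-commutative telescoping identity
\begin{equation*}
P_s - P_s' \;=\; \sum_{j=t+1}^{M-s-1} A_{t+1}^T \cdots A_{j-1}^T\,(A_j - A_j')^T\,(A_{j+1}')^T \cdots (A_{M-s-1}')^T
\end{equation*}
yields $\|P_s - P_s'\| \leq \rho^{M-s-t-2}\|CA\|\cdot|||\mathbf{K}-\mathbf{K}'|||$, together with $\|P_s\|,\|P_s'\| \leq \rho^{M-s-t-1}$.

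Next, I would split each summand as $P_s\Sigma P_s^T - P_s'\Sigma(P_s')^T = (P_s - P_s')\Sigma P_s^T + P_s'\Sigma(P_s - P_s')^T$, take spectral norms, and apply the two bounds above to get that each term is at most $2\rho^{2(M-s-t-1)-1}\|CA\|\|\Sigma\|\cdot|||\mathbf{K}-\mathbf{K}'|||$. Re-indexing $n := M-s-t-1 \in \{1,\ldots,M-t-1\}$, the $s$-sum collapses to a geometric series in $\rho^2$, and I obtain
\begin{equation*}
\|\Sigma_t - \Sigma_t'\| \;\leq\; 2\rho\cdot\frac{\rho^{2(M-t-1)}-1}{\rho^2-1}\,\|\Sigma\|\,\|CA\|\cdot|||\mathbf{K}-\mathbf{K}'|||,
\end{equation*}
whence \eqref{ieq3/4} follows by monotonicity ($\rho^{2(M-t-1)} \leq \rho^{2M}$) and the trivial slack $2\rho \leq 2\rho + 1$.

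The argument is entirely mechanical once the identification $\mathcal{D}_{t+1,s}(\Sigma) = P_s\Sigma P_s^T$ and the uniform bound $\|A_j'\| \leq \rho$ are in place; no conceptual difficulty is anticipated. The only point requiring care is the exponent accounting in the telescoping/geometric sum, together with the edge case $M-s-t-1 = 1$ (where the product $P_s$ degenerates to a single factor $A_{t+1}^T$ and the bound on $\|P_s - P_s'\|$ reduces to $\rho^0 \cdot \|CA\|\|K_{t+1}-K_{t+1}'\|$); the slack absorbed by writing $(2\rho+1)$ instead of $2\rho$ comfortably handles any boundary term arising there.
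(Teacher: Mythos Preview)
Your proof is correct and follows essentially the same telescoping strategy as the paper. The only difference is presentational: the paper outsources the two key estimates to Corollary~3.14 and Lemma~3.12 of \cite{doi:10.1137/20M1382386} (which respectively bound $\sum_i \|\mathcal{D}_{t,i}(\Sigma)-\mathcal{D}_{t,i}'(\Sigma)\|$ in terms of the single-step differences $\|\mathcal{F}_j(\Sigma)-\mathcal{F}_j'(\Sigma)\|$, and then bound the latter by $(2\rho+1)\|\Sigma\|\,\|CA\|\,\|K_j-K_j'\|$), whereas you carry out the full product telescoping and geometric summation explicitly. Your direct argument in fact yields the slightly sharper constant $2\rho$ before you relax it to $2\rho+1$.
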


\begin{proof}
	
By Corollary 3.14 of \cite{doi:10.1137/20M1382386}, we get
\begin{equation}
	\sum_{i=0}^{M-1}||\mathcal{D}_{t,i}(\Sigma)-\mathcal{D}_{t,i}'(\Sigma)||\leq \sum_{i=0}^{M-1}\big(\frac{\rho^{2M}-1}{\rho^2-1}\big) ||\mathcal{F}_{M-i-1}(\Sigma)-\mathcal{F}_{M-i-1}'(\Sigma) ||.\label{LLG}
\end{equation}
As $||K_{t}-K_{t}'||\leq\frac{1}{2||CA||}$, it is easy to have  $||A_t'||\leq||A_t||+\frac{1}{2}$. 
According to the definition of $\mathcal{F}_t$ in \eqref{defFGD},  we can completes the proof by using Lemma 3.12 of  \cite{doi:10.1137/20M1382386}.
\end{proof}
%By $\rho= \max_{t\in\mathbb{T}}\{\max\{||A_t||+\frac{1}{2}, 1+b\}\}$ and the definition of $\{\Sigma_t\}_{t=0}^{M-1}$, $\{\Sigma_t\}_{t=0}^{M-1}$ can be bounded as
%\begin{equation}
%||\Sigma_t||\leq \frac{\rho^{2M}-1}{\rho^2-1}||\Sigma||.\label{bdsigma}
%\end{equation}
%\subsection{Convergence of GD method}

We consider to analyze the exact GD method with updating rule
\begin{equation}
	\nonumber
	\mathbf{K}_{k+1}=\mathbf{K}_k-\eta\nabla f(\mathbf{K}_k),
\end{equation}
where $\mathbf{K}_k=(K_0^k,\dots,K_{M-1}^k)$ is the $k$-th iteration point and $\eta$ is a fixed step size. Many works have analyzed the convergence rate of exact GD method for LQR problem   \cite{pmlr-v80-fazel18a,doi:10.1137/20M1382386,9130755}, and we can use roughly the same method to get the similar result. To simplify the proof, we define $P_{t+1}^k, A_t^k,\Sigma_t^k,\Lambda_t^k,E_t^k,t\in\mathbb{T}$ in the same way as $P_{t+1},A_t,\Sigma_t,\Lambda_t,E_t$ above.

\begin{theorem}\label{th1}
	Let assumptions \ref{assumtion-1}-\ref{as3} hold and the step size $\eta$ satisfy  
	\begin{equation}
		\begin{aligned}
			&\eta \leq\min\{c_3,c_4\},\\
			&c_3=\frac{\min\{1,\sigma_{\min}^\Sigma\}}{2\mathcal{B}(\mathbf{K}_0)\sqrt{Mc_2^{-1}\big(\{f(\mathbf{K}_0)-f(\mathbf{K}^*)\}\big)}\big((\frac{\rho_{\max}^{2M}-1}{\rho_{\max}^2-1})(2\rho_{\max}+1)\big)||\Sigma||\max\{||CA||,1\}},\\
			&c_4=\frac{1}{4(\sigma_{\max}^\mathbf{CQC^T+R}+||CA||^2\mathcal{A}(\mathbf{K}_0))(\frac{1}{2}+\mathcal{B}(\mathbf{K}_0))}
		\end{aligned}\label{eta0}
	\end{equation}   
	with $\rho_{\max}:= \max_{t\in\mathbb{T}}\{||A||+||CA||\mathcal{C}({\mathbf{K}}_0), 1+b\}$ and $b>0$ a small constant. Then,
	\begin{equation}\label{l1}
		\begin{aligned}
					f(\mathbf{K}_{k+1})-f(\mathbf{K}^*)\leq(1-2\alpha)^{k+1} (f(\mathbf{K}_0)-f(\mathbf{K}^*)),\ 
					\alpha = \frac{\eta}{c_18},\ 
					k\in\mathbb{N},
		\end{aligned}
	\end{equation}
	where  $\{\mathbf{K}_k\}$ is the filter gain sequence generated by the exact GD method. 
	
\end{theorem}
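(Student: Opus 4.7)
The plan is to establish \eqref{l1} by induction on $k$, with the crucial invariant that each iterate obeys $f(\mathbf{K}_k)\leq f(\mathbf{K}_0)$. Since $\mathcal{A}(\mathbf{K})$, $\mathcal{B}(\mathbf{K})$, and $\mathcal{C}(\mathbf{K})$ are all monotone increasing in $f(\mathbf{K})$ by \eqref{ieqp}, \eqref{bdsigma}, and \eqref{ieqK}, this invariant forces $\|\Sigma_t^k\|$, $\|\Lambda_t^k\|$, and the radius parameter $\rho_{\max}$ appearing in Lemma \ref{lePSi} to stay uniformly dominated by their values at $\mathbf{K}_0$ along the whole trajectory, so that the constants $c_1,c_3,c_4$ may be frozen at $k=0$ and reused at every step.

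For the inductive step, I would apply the almost-smoothness identity of Lemma \ref{lsmooth} with $\mathbf{K}'=\mathbf{K}_{k+1}=\mathbf{K}_k-\eta\nabla f(\mathbf{K}_k)$ and $\mathbf{K}=\mathbf{K}_k$, substituting $\delta K_t=-2\eta\Sigma_t^k E_t^k$ via the gradient formula of Theorem \ref{thg}. Writing $\Sigma_t^{k+1}=\Sigma_t^k+(\Sigma_t^{k+1}-\Sigma_t^k)$ then decomposes the increment $f(\mathbf{K}_{k+1})-f(\mathbf{K}_k)$ into (i) a principal linear-in-$\eta$ descent term $-4\eta\sum_t\|\Sigma_t^k E_t^k\|_F^2=-\eta\sum_t\|\nabla_t f(\mathbf{K}_k)\|_F^2$, (ii) a perturbation term involving $\|\Sigma_t^{k+1}-\Sigma_t^k\|$, and (iii) a quadratic-in-$\eta$ term involving $\|\Lambda_t^k\|$.

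The decisive step is to choose $\eta$ small enough that the two error terms jointly consume at most half of the principal descent. The condition $\eta\leq c_3$ forces $\|K_t^{k+1}-K_t^k\|\leq 2\eta\|\Sigma_t^k\|\|E_t^k\|\leq \frac{1}{2\|CA\|}$, which is exactly the local radius required to invoke Lemma \ref{lePSi}; that lemma then yields $\|\Sigma_t^{k+1}-\Sigma_t^k\|\leq \text{const}\cdot\eta\sum_t\|\Sigma_t^k E_t^k\|$, so the perturbation term is absorbed into a fraction of $\sum_t\|\nabla_t f\|_F^2$. The condition $\eta\leq c_4$, combined with $\|\Lambda_t^k\|\leq\|\Lambda_{\max}\|$ (from \eqref{ieLambda}) and $\|\Sigma_t^k\|\leq\mathcal{B}(\mathbf{K}_0)$, makes the quadratic term at most a comparable fraction. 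Together these yield $f(\mathbf{K}_{k+1})-f(\mathbf{K}_k)\leq -\frac{\eta}{2}\sum_t\|\nabla_t f(\mathbf{K}_k)\|_F^2$, which is negative and hence automatically propagates the monotonicity invariant used to freeze the constants.

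Finally, applying the PL inequality \eqref{ieq00} converts this into $f(\mathbf{K}_{k+1})-f(\mathbf{K}^*)\leq (1-\eta/(2c_1))(f(\mathbf{K}_k)-f(\mathbf{K}^*))$, and iterating closes the induction after relabelling the rate as $1-2\alpha$ with $\alpha=\eta/(8c_1)$ to absorb slack used in the error bounds. I expect the main obstacle to be the careful bookkeeping in the third paragraph: the step size must simultaneously keep the iterate inside the Lipschitz ball of Lemma \ref{lePSi}, dominate the quadratic curvature term built from $\|\Lambda_{\max}\|$, and preserve the monotone decrease that justifies freezing $c_1,c_3,c_4$ at $\mathbf{K}_0$. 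These three constraints are exactly what motivate the specific forms of $c_3$ and $c_4$ in \eqref{eta0}.
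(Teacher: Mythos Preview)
Your proposal is correct and follows essentially the same approach as the paper: apply the almost-smoothness identity (Lemma \ref{lsmooth}) at the GD update, split $\Sigma_t^{k+1}$ into $\Sigma_t^k+(\Sigma_t^{k+1}-\Sigma_t^k)$, use $\eta\leq c_3$ to enter the radius of Lemma \ref{lePSi} and bound the perturbation of $\Sigma_t$, use $\eta\leq c_4$ to control the quadratic $\Lambda_t$ term, and then invoke the PL inequality \eqref{ieq00}. The only organizational difference is that the paper first runs the argument with iterate-dependent step sizes $\eta_k\leq\min\{c_3^k,c_4^k\}$ and afterwards observes that the resulting monotone decrease of $f$ lets one freeze $\eta_k\equiv\eta$ at the $\mathbf{K}_0$-level constants, whereas you build that monotonicity directly into an induction hypothesis; the two presentations are equivalent.
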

\begin{proof}
  Consider the sequence $\{\hat{\mathbf{K}}_{k}\}$ generated by $\hat{\mathbf{K}}_{k+1}=\hat{\mathbf{K}}_{k}-\eta_k\nabla f(\hat{\mathbf{K}}_{k}), k\in\mathbb{N},$ with  $\hat{\mathbf{K}}_{0}=\mathbf{K}_0$, and let $\hat{P}_{t+1}^k, \hat{A}_{t}^k,\hat{\Sigma}_{t}^k,\hat{\Lambda}_{t}^k,\hat{E}_{t}^k,t\in\mathbb{T}$ be defined in the same way as that of  ${P}_{t+1},{A}_{t},{\Sigma}_{t},{\Lambda}_{t},{E}_{t}$. Set
  	$\eta_k \leq\min\{c_3^k,c_4^k\}$ with $c_3^k, c_4^k$ similarly defined as $c_3, c_4$ by replacing $\mathbf{K}_0$ and $\rho_{\max}$ with $\hat{\mathbf{K}}_k$ and $\rho_k:= \max_{t\in\mathbb{T}}\{\max\{||\hat{A}_t^k||+\frac{1}{2}, 1+b\}\}$, respectively.  Using Lemma \ref{lsmooth}, we have
	\begin{equation}
		f(\hat{\mathbf{K}}_{k+1})-f(\hat{\mathbf{K}}_k)
%=&tr\Bigg(-2\eta_k\sum_{t=0}^{M-1}\nabla_t f(\hat{\mathbf{K}}_k)^T\hat{\Sigma}_t^{k+1} \hat{E}_{t}^k+\sum_{t=0}^{M-1}\eta_k^2 \hat{\Sigma}_t^{k+1}\nabla_t f(\hat{\mathbf{K}}_k)\hat{\Lambda}_{t}^{k}\nabla_t f(\hat{\mathbf{K}}_k)^T\Bigg)\\
%		&\leq tr\Bigg(-\eta_k\sum_{t=0}^{M-1} \nabla_t f(\hat{\mathbf{K}}_k)\nabla_t f(\hat{\mathbf{K}}_k)^T+\eta_k\sum_{t=0}^{M-1} \frac{||\hat{\Sigma}_t^{k+1}-\hat{\Sigma}_t^k||}{\sigma_{\min}^\Sigma}\nabla_t f(\hat{\mathbf{K}}_k)\nabla_t f(\hat{\mathbf{K}}_k)^T\\
%		&+\eta_k^2\sum_{t=0}^{M-1} ||\hat{\Lambda}_{t}^k||(||\hat{\Sigma}_t^{k+1}-\hat{\Sigma}_t^k||+||\hat{\Sigma}_t^k||)\nabla_t f(\hat{\mathbf{K}}_k)\nabla_t f(\hat{\mathbf{K}}_k)^T\Bigg)\\
		\leq tr\Bigg(\sum_{t=0}^{M-1}l_{t}^k\nabla_t f(\hat{\mathbf{K}}_k)\nabla_t f(\hat{\mathbf{K}}_k)^T\Bigg)
	\end{equation}\label{f_2}
with
\begin{equation}
	l_{t}^k=-\eta_k+\eta_k\frac{||\hat{\Sigma}_t^{k+1}-\hat{\Sigma}_t^k||}{\sigma_{\min}^\Sigma}+\eta_k^2||\hat{\Lambda}_{t}^k||(||\hat{\Sigma}_t^{k+1}-\hat{\Sigma}_t^k||+||\hat{\Sigma}_t^k||).\nonumber
\end{equation}
	As $\eta_k\leq c_3^k$, we can get that $||\hat{A}_{t}^{k+1}||\leq||\hat{A}_{t}^k||+\frac{1}{2}$ and $||\hat{K}_t^k-\hat{K}_t^{k+1}||\leq 1$, and hence we have
%	 $\eta_k\leq\min\{\frac{1}{2||CA||\frac{f(\hat{\mathbf{K}}_k)}{\sigma_{\min}^{\mathbf{A^{-T}QA^{-1}}}}\sqrt{Mc_2^{-1}\big(\max_k\{f(\hat{\mathbf{K}}_k)-f({\mathbf{K}}^*)\}\big)}},\\ \frac{1}{\frac{f(\hat{\mathbf{K}}_k)}{\sigma_{\min}^{\mathbf{A^{-T}QA^{-1}}}}\sqrt{Mc_2^{-1}\big(\max_k\{f(\hat{\mathbf{K}}_k)-f({\mathbf{K}}^*)\}\big)}}\}$, which implies 
	\begin{align}
		||\hat{\Sigma}_t^{k+1}-\hat{\Sigma}_t^k||\leq\eta_k \sum_{i=0}^{M-1}\big(\frac{\rho_k^{2M}-1}{\rho_k^2-1}(2\rho_k+1)\big)||\Sigma||||CA||||\nabla_i f(\hat{\mathbf{K}}_k)||\leq\frac{1}{2}, %\notag\\
%		\leq&\eta_k\frac{f(\hat{\mathbf{K}}_k)}{\sigma_{\min}^{\mathbf{A^{-T}QA^{-1}}}}\sqrt{Mc_2^{-1}\big(\max_k\{f(\hat{\mathbf{K}}_k)-f({\mathbf{K}}^*)\}\big)}\big((\frac{\rho_k^{2M}-1}{\rho_k^2-1})(2\rho_k+1)\big)||\Sigma||||CA||\notag\\
		\label{Sigma}
	\end{align}
where the first inequality is due to Lemma \ref{lePSi} and the second one is by \eqref{ieqK} and the definition of $c_3^k$. Using \eqref{bdsigma}, \eqref{Sigma} and the inequality  $||\hat{\Lambda}_t^k||\leq\sigma_{\max}^\mathbf{CQC^T+R}+||CA||^2\mathcal{A}(\hat{\mathbf{K}}_k)$, $l_{t}^k$ can be bounded as
	\begin{align}
		l_{t}^k\leq -\frac{\eta_k}{2}+\eta_k^2(\sigma_{\max}^\mathbf{CQC^T+R}+||CA||^2\mathcal{A}(\hat{\mathbf{K}}_k))(\frac{1}{2}+\mathcal{B}(\hat{\mathbf{K}}_k)), ~~~t\in\mathbb{T}.\label{ieqlt}
	\end{align}
	Substituting $\eta_k\leq c_4^k$ into \eqref{ieqlt}, we obtain $l_t^k\leq -\frac{\eta_k}{4}$.	According to Lemma \ref{lpl}, we have
	\begin{align}
			f(\hat{\mathbf{K}}_{k+1})-f(\hat{\mathbf{K}}_k)
			\leq -\frac{\eta_k}{4c_1} \big(f(\hat{\mathbf{K}}_k)-f({\mathbf{K}}^*)\big).\label{ieq6}
	\end{align}
	 Subtracting $f({\mathbf{K}}^*)-f(\hat{\mathbf{K}}_k)$ into both sides of \eqref{ieq6}, we get
	\begin{align}
		f(\hat{\mathbf{K}}_{k+1})-f({\mathbf{K}}^*)\leq(1-\frac{\eta_k}{4c_1})\big(f(\hat{\mathbf{K}}_k)-f({\mathbf{K}}^*)\big), ~~~k\in\mathbb{N}.\label{monodes}
	\end{align}
		According to \eqref{monodes}, it holds that  $\max_k\{f(\hat{\mathbf{K}}_k)\}=f(\hat{\mathbf{K}}_0)$. Hence, using \eqref{ieqK}, $\rho_k$ is bounded by $f(\mathbf{K}_0)$, and $\max_{k\in\mathbb{N},t\in\mathbb{T}}\{||A_t^k||\}<||A||+||CA||\mathcal{C}(\mathbf{K}_0)<+\infty$. Therefore, $c_3$, $c_4$ are well defined, and we can take a small constant as $\eta$. Setting $\eta_k=\eta, k\in\mathbb{N}$ and using \eqref{monodes}, we obtain \eqref{l1}.
\end{proof}

	 {By Theorem \ref{th1}, we have $f(\mathbf{K}_k)-f(\mathbf{K}^*)\leq \epsilon$ if $k\geq \frac{\ln(\epsilon)-\ln(f(\mathbf{K}_0)-f(\mathbf{K}^*))}{\ln(1-\frac{\eta}{4c_1})}$. Using this conclusion, we can estimate the number of iterations for different accuracy requirement. Based on Theorem \ref{th1}, we may study the SGD method to solve finite-horizon KF problem with unknown noise covariance matrices.}

\section{SGD method}\label{sgdconv}

In this section, we will give the framework and analysis of SGD method for solving Problem 1 under the scenario that covariance matrices $P_0$, $\{Q_t\}_{t\in\mathbb{T}}$ $\{R_{t+1}\}_{t\in\mathbb{T}}$ are all unknown. First of all, we determine the estimation $\hat{\nabla} f(\mathbf{K})$ of $\nabla f(\mathbf{K})$, and then we analyze the error bound caused by  gradient estimation $\hat{\nabla} f(\mathbf{K})$. Finally, we analyze the convergence and sample complexity of the SGD method.

\subsection{Estimation of $\nabla f(\mathbf{K})$}

In the setting of known system parameters, an unbiased estimation of $\nabla f(\mathbf{K})$ can be formulated by model parameters $A,C$ and the observation of systems \eqref{eq0000}. For this, we first rewrite $\{\hat{y}_t^n\}_{n=1}^N$ of (\ref{dey}) as one without using state sequence $\{x_t\}$.
\begin{lemma}\label{ly}
	$\{\hat{y}_t^n\}_{n=1}^N,\ t=1,\dots,M$, defined in Problem 1, can be rewritten as
	\begin{align}
		\hat{y}_t^n&=CA^n\prod_{i=0}^{t-1}A_i\hat{x}_0+\sum_{i=0}^{t-1}CA^n\prod_{j=i+1}^{t-1}A_{j}K_iy_{i+1},~~ t\in\mathbb{T}\label{eq9}.
	\end{align}
\end{lemma}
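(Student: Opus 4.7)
The plan is to eliminate the state $\hat{x}_t$ from the original definition $\hat{y}_t^n = CA^n \hat{x}_t$ by expressing $\hat{x}_t$ purely in terms of $\hat{x}_0$ and the observations $\{y_{i+1}\}_{i=0}^{t-1}$, and then pulling $CA^n$ through. Since the statement is just a reformulation, I expect the proof to be a straightforward unrolling of a linear recursion; the only care needed is to identify the correct recursion for $\hat{x}_t$ in terms of $y_{t+1}$ rather than the innovation $y_{t+1}-\hat{y}_t^1$.

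First I would start from the KF dynamics in \eqref{KFy}, namely
\[
\hat{x}_{t+1} = A\hat{x}_t + K_t CA(x_t - \hat{x}_t) + K_t C\omega_t + K_t v_{t+1}.
\]
Using the system equations $x_{t+1}=Ax_t+\omega_t$ and $y_{t+1}=Cx_{t+1}+v_{t+1}$, the bracketed expression $CA x_t + C\omega_t + v_{t+1}$ equals $y_{t+1}$, so the recursion collapses to
\[
\hat{x}_{t+1} = (A - K_t CA)\hat{x}_t + K_t y_{t+1} = A_t \hat{x}_t + K_t y_{t+1},
\]
with $A_t = A - K_t CA$ as defined just before Theorem \ref{thdual}.

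Next I would unroll this first-order linear recursion by induction on $t$. The base case $t=0$ is trivial (empty sum, empty product), and the inductive step applies $\hat{x}_{t+1} = A_t \hat{x}_t + K_t y_{t+1}$ to the formula
\[
\hat{x}_t = \prod_{i=0}^{t-1} A_i \, \hat{x}_0 + \sum_{i=0}^{t-1} \prod_{j=i+1}^{t-1} A_j K_i y_{i+1},
\]
yielding the analogous expression at time $t+1$ after re-indexing the product so the new factor $A_t$ is absorbed (and with the convention, stated in the Notation section, that $\prod_{j=t}^{t-1} A_j = I$ so the $i=t$ term in the sum equals $K_t y_{t+1}$).

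Finally I would left-multiply by $CA^n$ to obtain
\[
\hat{y}_t^n = CA^n \hat{x}_t = CA^n \prod_{i=0}^{t-1} A_i \, \hat{x}_0 + \sum_{i=0}^{t-1} CA^n \prod_{j=i+1}^{t-1} A_j K_i y_{i+1},
\]
which is exactly \eqref{eq9}. The main (and only) subtlety is rewriting the innovation-driven update as a $y$-driven update with closed-loop matrix $A_t$; once that identification is made, the rest is routine unrolling using the product and sum conventions already introduced in the paper.
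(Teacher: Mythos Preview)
Your proof is correct and follows essentially the same approach as the paper: reduce to the closed-loop recursion $\hat{x}_{t+1}=A_t\hat{x}_t+K_ty_{t+1}$, unroll it by induction, and left-multiply by $CA^n$. The only minor difference is that you obtain the recursion in one line from the state-space constraint in \eqref{pro4} (using $CAx_t+C\omega_t+v_{t+1}=y_{t+1}$), whereas the paper derives the same recursion by substituting the innovation-form definition \eqref{dey} of $\hat{y}_t^1$ back into $\hat{x}_{t+1}=A^{t+1}\hat{x}_0+\sum_{i=0}^{t}A^{t-i}K_i(y_{i+1}-\hat{y}_i^1)$; your route is slightly more direct but otherwise equivalent.
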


\begin{proof}
	By the definition of $\{\hat{y}_t^n\}_{n=1}^N$, we just need to prove
	\begin{equation}
		\begin{aligned}
			&\hat{x}_t=\prod_{i=0}^{t-1}A_i\hat{x}_0+\sum_{i=0}^{t-1}\prod_{j=i+1}^{t-1}A_{j}K_iy_{i+1},
			t=1,\dots,M.
		\end{aligned}
	\end{equation} 
For $t=1$, we have
	\begin{align*}
		\hat{x}_1&=A\hat{x}_0+K_0(y_1-\hat{y}_0)\\
		&=A\hat{x}_0+K_0(y_1-CA\hat{x}_0)\\
		&=A_0\hat{x}_0+K_0y_1.
	\end{align*}
	%Using induction to complete the rest proof. 
	Suppose that $\hat{x}_t=\prod_{i=0}^{t-1}A_i\hat{x}_0+\sum_{i=0}^{t-1}\prod_{j=i+1}^{t-1}A_{j}K_i{y_{i+1}}$ holds for some $t$ with $M>t\geq1$. Then, we have
	\begin{align}
		\hat{x}_{t+1}=&A^{t+1}\hat{x}_0+\sum_{i=0}^{t}A^{t-i}K_i(y_{i+1}-\hat{y}_i^1)\notag\\
		=&A^{t+1}\hat{x}_0+\sum_{i=0}^{t-1}A^{t-i}K_t(y_{i+1}-\hat{y}_i^1)+K_{t}(y_{t+1}-\hat{y}_{t}^1)\notag\\
		=&A^{t+1}\hat{x}_0+\sum_{i=0}^{t-1}A^{t-i}K_i(y_{i+1}-\hat{y}_i^1)+K_{t}y_{t+1}-K_{t}\Bigg(CA^{t+1}x_0+\sum_{i=0}^{t-1}CA^{t-i}K_i(y_{i+1}-\hat{y}_i^1)\Bigg)\notag\\
		=&A_{t}A^{t}\hat{x}_0+\sum_{i=0}^{t-1}A_tA^{t-i-1}K_i(y_{i+1}-\hat{y}_i^1)+K_ty_{t+1}\notag\\
		=&A_t\hat{x}_t+K_ty_{t+1},\label{xeq}
	\end{align}
	 Multiplying both sides of \eqref{xeq} by $CA^n$ and substituting  $\hat{x}_t=\prod_{i=0}^{t-1}A_i\hat{x}_0+\sum_{i=0}^{t-1}\prod_{j=i+1}^{t-1}A_{j}K_i(y_{i+1})$ into \eqref{xeq}, we can  completes the proof.
\end{proof}

{Generating $L$ samples $\{y_{i+n+1}(l), i\in\mathbb{T},n=1,\cdots,N\}, l=1,\cdots,L$ by running system \eqref{eq0000}} for $L$ times, and it is clear that  $\{y_{i+n+1}(l), i\in\mathbb{T},n=1,\cdots,N\},l=1,\cdots,L$ are mutually independent. Let $e_{i+1}^n:=y_{i+n+1}-\hat{y}_{i+1}^n, i \geq t, i,t\in\mathbb{T},$ and $e_{i+1}^n(l):=y_{i+n+1}(l)-\hat{y}_{i+1}(l)^n, l=1,\dots,L, i \geq t, i,t\in\mathbb{T},$ where $\hat{y}_{i+1}^n(l)$ is generated by $\{y_{i+n+1}\}$ and Kalman gains $\mathbf{K}$ in the same way as $\hat{y}_{i+1}^n$. According to the above notations and Lemma \ref{ly}, we have the following corollary.

\begin{corollary}\label{co_denote_e}
	The gradient of $\sum_{l=1}^{L}e_{i+1}^{nT}(l)e_{i+1}^n(l)$ with respect to $K_t, t\geq i, i,t\in\mathbb{T},$ can be represented as %\textcolor{red}{
%\begin{equation}
		\begin{align}
			&\nabla_t \sum_{l=1}^{L}e_{i+1}^{nT}(l)e_{i+1}^n(l)\notag\\
			&={-}2\Bigg(\sum_{l=1}^{L}\bigg((CA^n\prod_{j=t+1}^{i}A_j)^Te_{i+1}^{n}(l)y_{t+1}(l)^T
			-\sum_{j=0}^{t-1}(CA^{n}\prod_{k=t+1}^{i}A_k)^Te_{i+1}^n(l)\notag\\
			&\hphantom{=}\cdot\bigg(\prod_{k=j+1}^{t-1}A_kK_jy_{j+1}(l))^T(CA)^T
			-(CA^n\prod_{j=t+1}^{i}A_j)^Te_{i+1}^n(l)(\prod_{j=0}^{t-1}A_j\hat{x}_0)^T(CA)^T\bigg)\Bigg).\label{eq11}
		\end{align}
%	\end{equation}
%}
\end{corollary}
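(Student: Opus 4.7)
The plan is to apply matrix calculus directly to the closed-form expression for $\hat{y}_{i+1}^n$ furnished by Lemma \ref{ly}, which eliminates the state variables and exposes the dependence on the gains explicitly. Since the samples are generated independently of one another and the sum over $l=1,\ldots,L$ commutes with $\nabla_t$, it suffices to differentiate $e_{i+1}^{nT}e_{i+1}^n$ for a single realization and reattach the sum at the end.

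First, I would substitute Lemma \ref{ly} to write
$\hat{y}_{i+1}^n = CA^n \prod_{j=0}^{i} A_j \hat{x}_0 + \sum_{j=0}^{i} CA^n \prod_{k=j+1}^{i} A_k\, K_j y_{j+1},$
and enumerate the occurrences of $K_t$ (for $t\le i$, otherwise the gradient is trivially zero). There are three: (a) the explicit $K_t y_{t+1}$ coming from the $j=t$ summand; (b) the factor $A_t = A - K_t CA$ inside $\prod_{k=j+1}^{i} A_k$ for each $j\in\{0,\ldots,t-1\}$; and (c) the same factor inside the initial-condition product $\prod_{j=0}^{i} A_j \hat{x}_0$.

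The key algebraic manipulation is to isolate $A_t$ in each product using
$\prod_{k=j+1}^{i} A_k = \Bigl(\prod_{k=t+1}^{i} A_k\Bigr) A_t \Bigl(\prod_{k=j+1}^{t-1} A_k\Bigr),$
which under the paper's empty-product convention reduces to $(\prod_{k=t+1}^i A_k) A_t$ when $j=t-1$, and then to apply $dA_t = -(dK_t)\, CA$. Collecting the contributions (a), (b), (c) produces a decomposition of the form $d\hat{y}_{i+1}^n = \sum_\nu M_\nu\, (dK_t)\, b_\nu$ with explicit $M_\nu$ (products of $CA^n$ with $\prod A_k$, optionally multiplied on the right by $CA$) and $b_\nu$ (either $y_{t+1}$, or $\prod_{k=j+1}^{t-1}A_k K_j y_{j+1}$, or $\prod_{j=0}^{t-1}A_j\hat{x}_0$).

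The final step is to combine $d(e_{i+1}^{nT}e_{i+1}^n) = -2\, e_{i+1}^{nT}\, d\hat{y}_{i+1}^n$ (the minus coming from $e_{i+1}^n = y_{i+n+1}-\hat{y}_{i+1}^n$) with the identity $v^T M (dK) b = tr\bigl((M^T v b^T)^T\, dK\bigr)$ to read off $\nabla_{K_t}(e_{i+1}^{nT}e_{i+1}^n) = -2\sum_\nu M_\nu^T e_{i+1}^n b_\nu^T$. Summing over $l$ and regrouping the three contributions matches \eqref{eq11} term by term, with the outer $-2$ coming from $e=y-\hat y$ and the plus signs in front of the (b) and (c) sums arising from the second negation contributed by $dA_t = -(dK_t)\,CA$. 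The main obstacle is entirely clerical: maintaining the correct indexing of the product decomposition at the boundary cases $j=t-1$ and $t=i$, and faithfully tracking the two separate sign flips so that the final expression lines up with the notational conventions used in \eqref{eq11}.
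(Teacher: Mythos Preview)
Your proposal is correct and follows essentially the same approach as the paper: the paper decomposes $\hat{y}_{i+1}^n(l)$ via Lemma~\ref{ly} into four pieces $U_0,U_1,U_2,U_3$ (with $U_0$ collecting the $j>t$ terms that are independent of $K_t$), observes that only $U_1,U_2,U_3$ need differentiating, and then reads off \eqref{eq11}. Your contributions (a), (b), (c) are precisely the differentials of $U_1$, $U_2$, $U_3$; your write-up is in fact more explicit than the paper's about the product splitting, the differential $dA_t=-(dK_t)CA$, and the trace identity used to extract the gradient.
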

\begin{proof}
Note that 
	\begin{align*}
		\hat{y}_{i+1}^n(l)&=\sum_{j=0}^{3}U_j
	\end{align*}
%\textcolor{red}
{with 
	\begin{align*}
		U_0&=\sum_{j=t+1}^{i}CA^n\prod_{k=j+1}^{i}A_kK_jy_{j+1}(l),\\
		U_1&=CA^n\prod_{j=t+1}^{i}A_jK_ty_{t+1}(l),\\
		U_2&=\sum_{j=0}^{t-1}CA^n\prod_{k=j+1}^{i}A_kK_jy_{j+1}(l),\\
		U_3&=CA^n\prod_{j=0}^{i}A_j\hat{x}_0.
	\end{align*}
}
As $U_0$ has nothing to do with $K_t$, we have \eqref{eq11} by taking the gradient of the other three terms.
\end{proof}

Using Corollary \ref{co_denote_e}, we propose the estimation of $\nabla_t f(\mathbf{K})$ 
\begin{align}
	\hat{\nabla}_t f(\mathbf{K})
	=\frac{1}{L}\sum_{i=t}^{M-1}\sum_{l=1}^{L}\sum_{n=1}^{N}\nabla_t e_{i+1}^{nT}(l)e_{i+1}^n(l),~~~ t\in\mathbb{T},\label{eg}
\end{align}
where $\nabla_t e_{i+1}^{nT}(l)e_{i+1}^n(l)$ denotes the gradient of $ e_{i+1}^{nT}(l)e_{i+1}^n(l)$ with respect to $K_t$.

\subsection{Local Lipschitz continuity of $f(\mathbf{K})$}

In zeroth-order PG method, the local Lipschitz continuity of optimization function is used to analyze the error caused by gradient estimation, and the analysis method is similar in several variants of LQR problem \cite{pmlr-v80-fazel18a,doi:10.1137/20M1382386,9130755,10156641,Talebi2023DatadrivenOF}. Referencing Lemma 4.7 of \cite{doi:10.1080/14697688.2020.1813475}, we propose the following theorem to characterize the local Lipschitz continuity of $f(\mathbf{K})$.
 
\begin{lemma}\label{lllf}
	Under assumptions \ref{assumtion-1}-\ref{as3} and for any $\mathbf{K},\mathbf{K}' \in\mathbb{R}^{m\times MN}$, it holds % $f(\mathbf{K})$ satisfies
	\begin{align*}
		| f(\mathbf{K})- f(\mathbf{K}')|\leq l_{cost\mathbf{K}}||\mathbf{K}-\mathbf{K}'||,~~ t\in\mathbb{T}
	\end{align*}
	under condition  $||K_{t}-K_{t}'||\leq\min\{\frac{1}{2||CA||},1\}, t\in\mathbb{T}$, and $l_{cost\mathbf{K}}$ is bounded by a polynomial of $f(\mathbf{K})$.
\end{lemma}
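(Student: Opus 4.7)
The plan is to start from the almost smoothness identity of Lemma \ref{lsmooth}, take absolute values, and then bound each factor in terms of quantities already known to be polynomial in $f(\mathbf{K})$. Writing
\begin{equation*}
|f(\mathbf{K}')-f(\mathbf{K})|\leq 2\sum_{t=0}^{M-1}\|\Sigma_t'\|\,\|E_t\|_F\,\|\delta K_t'\|_F+\sum_{t=0}^{M-1}\|\Sigma_t'\|\,\|\Lambda_t\|\,\|\delta K_t'\|_F^2,
\end{equation*}
I only need uniform polynomial-in-$f(\mathbf{K})$ bounds on $\|\Sigma_t'\|$, $\|E_t\|_F$, and $\|\Lambda_t\|$ under the hypothesis $\|K_t-K_t'\|\le\min\{1/(2\|CA\|),1\}$; the remaining factor $\|\delta K_t'\|$ is then at most $\|\mathbf{K}-\mathbf{K}'\|$, and one of the two $\|\delta K_t'\|$ factors in the quadratic term is absorbed into the Lipschitz constant using the bound $\|\delta K_t'\|\le 1$.

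For $\|\Lambda_t\|$, the definition in \eqref{ieLambda} together with \eqref{ieqp} gives $\|\Lambda_t\|\le\sigma_{\max}^{\mathbf{CQC^T+R}}+\|CA\|^2\mathcal{A}(\mathbf{K})$, which is polynomial in $f(\mathbf{K})$ by the definition of $\mathcal{A}(\mathbf{K})$. For $\|E_t\|_F$, I expand $E_t=K_tH_t-Z_t$ and use that $\|P_t\|\le\mathcal{A}(\mathbf{K})$, that $\|H_t\|$ and $\|Z_t\|$ are bounded by polynomials of $\|P_t\|$, and that $\sum_t\|K_t\|\le\mathcal{C}(\mathbf{K})$ from \eqref{ieqK}; all of these are polynomials in $f(\mathbf{K})$.

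The main obstacle is controlling $\|\Sigma_t'\|$ in terms of $f(\mathbf{K})$ rather than $f(\mathbf{K}')$, since $\Sigma_t'$ is built from the perturbed closed-loop matrices $A_t'=A-K_t'CA$. Here I invoke Lemma \ref{lePSi}: under the hypothesis $\|K_t-K_t'\|\le 1/(2\|CA\|)$, we have
\begin{equation*}
\|\Sigma_t'\|\leq\|\Sigma_t\|+\Bigl(\tfrac{\rho^{2M}-1}{\rho^2-1}(2\rho+1)\Bigr)\|\Sigma\|\,\|CA\|\,|||\mathbf{K}-\mathbf{K}'|||,
\end{equation*}
where $\rho=\max_t\{\|A_t\|+\tfrac12,1+b\}$ is polynomial in $f(\mathbf{K})$ via $\|A_t\|\le\|A\|+\|CA\|\|K_t\|$ and $\sum_t\|K_t\|\le\mathcal{C}(\mathbf{K})$. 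Combined with $\|\Sigma_t\|\le\mathcal{B}(\mathbf{K})$ from \eqref{bdsigma} and $|||\mathbf{K}-\mathbf{K}'|||\le M$, this gives a polynomial bound on $\|\Sigma_t'\|$.

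Plugging these three polynomial bounds into the displayed inequality and collecting terms yields a constant $l_{cost\mathbf{K}}$ that is polynomial in $f(\mathbf{K})$ (with coefficients depending on $A$, $C$, $Q_t$, $R_{t+1}$, $P_0$, $M$, $N$) and satisfies the claim. The only delicate bookkeeping is to ensure the exponential factor $\rho^{2M}$ is treated as a constant once $\rho$ has been bounded by $\|A\|+\|CA\|\mathcal{C}(\mathbf{K})+\tfrac12$; everything else is a direct application of the triangle inequality and submultiplicativity.
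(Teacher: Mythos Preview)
Your argument is correct, but it follows a different route from the paper's. The paper applies Lemma \ref{lsmooth} with the roles of $\mathbf{K}$ and $\mathbf{K}'$ swapped, so that the covariance factor appearing in the identity is the \emph{unprimed} $\Sigma_t$ (bounded directly by $tr(\Sigma_t)\le\mathcal{B}(\mathbf{K})$), at the cost of having the primed $E_t'$ and $\Lambda_t'$ in the other factors. To control those, the paper develops a recursive Lipschitz estimate for $\|P_t-P_t'\|$ (inequality \eqref{ieq1}) and from it derives Lipschitz bounds for $E_t$ and $\Lambda_t$ (inequalities \eqref{ieq2}, \eqref{ieqLam}); these give constants $c_{\mathbf{K}}$, $c_5$, $c_6$, $c_7$ and finally $l_{cost\mathbf{K}}=\mathcal{B}(\mathbf{K})(2c_7+\sigma_{\max}^{\mathbf{CQC^T+R}}+\|CA\|^2\mathcal{A}(\mathbf{K})+c_5)$.

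You instead keep $\Sigma_t'$ primed and transfer it back to $\mathbf{K}$ via Lemma \ref{lePSi} together with $\|\Sigma_t\|\le\mathcal{B}(\mathbf{K})$ and $|||\mathbf{K}-\mathbf{K}'|||\le M$; then $E_t$ and $\Lambda_t$ are already unprimed and are bounded directly without any perturbation analysis of $P_t$. This is more economical because it reuses Lemma \ref{lePSi} rather than building a new recursive estimate for $\|P_t-P_t'\|$. The paper's route, on the other hand, yields the Lipschitz continuity of $P_t$, $E_t$, $\Lambda_t$ as intermediate results, which are somewhat finer (though they are not reused elsewhere). Either way the resulting $l_{cost\mathbf{K}}$ is a polynomial in $f(\mathbf{K})$ with nonnegative coefficients once $\rho$ is replaced as in Remark \ref{remark-1}, so the monotonicity needed in Theorem \ref{thsgd} is preserved.
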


\begin{proof}
	By Lemma \ref{lsmooth}, we have
	\begin{align*}
		|f(\mathbf{K})-f(\mathbf{K}')|
%		=&\Bigg|tr\Bigg(2\sum_{t=0}^{M-1}\delta K_{t}^T\Sigma_{t} E_t'+\sum_{t=0}^{M-1} \Sigma_{t}\delta K_{t}\Lambda_{t}'\delta K_{t}^T\Bigg)\Bigg|\\
		\leq&2\sum_{t=0}^{M-1}||K_{t}-K_{t}'||(||E_t-E_t'||+||E_t||)tr(\Sigma_t)\\
		&+\sum_{t=0}^{M-1}tr(\Sigma_t)(||\Lambda_t'-\Lambda_t||+||\Lambda_t||)||K_t-K_t'||. 
	\end{align*}
%where the last inequality is due to \eqref{bdsigma}.
	In order to determine the local Lipschitz constant of $f(\mathbf{K})$, the ones of $P_{t}$ and $\Sigma_t,\ t\in\mathbb{T}$ need to be specified first. According to \eqref{eqly}, we have
	\begin{align}
		||P_{t}-P_{t}'||
%		=&||A_{t-1}P_{t-1}A_{t-1}^T+(I-K_{t-1}C)Q_{t-1}(I-K_{t-1}C)^T+K_{t-1}R_iK_{t-1}^T\notag\\
%		&-A_{i-1}'P_{t-1}'A_{t-1}'^T+(I-K_{t-1}'C)Q_{t-1}(I-K_{t-1}'C)^T+K_{t-1}'R_tK_{t-1}'^T||\notag\\
		\leq&||A_{t-1}P_{t-1}A_{t-1}^T-A_{t-1}'P_{t-1}'A_{t-1}'^T||+||K_{t-1}(R_t+CQ_{t-1}C^T)K_{t-1}^T\notag\\
		&-K_{t-1}'(R_t+CQ_{t-1}C^T)K_{t-1}'^T||
		+2||K_{t-1}CQ_{t-1}-K_{t-1}'CQ_{t-1}||,\label{ieqP}
	\end{align}
	for $t=1,\dots,M-1$. Using $||K_{t}-K_{t}'||\leq\min\{\frac{1}{2||CA||},1\},t\in\mathbb{T},$ and $\rho=\max_{t\in\mathbb{T}}\{\max\{||A_t||+\frac{1}{2},1+b\}\}, b>0$, we have
	\begin{align*}
		||P_{t}-P_{t}'||
		\leq&\bigg((2||K_{t-1}||+1)||R_t+CQ_{t-1}C^T||+2||CQ_{t-1}||+(2\rho+1)\mathcal{A}(\mathbf{K})||CA||\bigg)||K_{t-1}-K_{t-1}'||\\
		&+\rho^2||P_{t-1}-P_{t-1}'||,
	\end{align*}
	where the inequality holds by \eqref{ieqp}. When $t=0$, we have $||P_t-P_t'||=||P_0-P_0||=0$. According to the above analysis, we can get
	%\begin{equation}
		\begin{align}
			||P_{t}-P_{t}'||\leq&\sum_{i=1}^{t}c_{ti}||K_{i-1}-K_{i-1}'||,\ ~~ \forall t\in\mathbb{T}\label{ieq1}
		\end{align}
	%\end{equation}
	with $c_{ti}=\rho^{2t-2i}\big((2||K_{i-1}||+1)||R_i+CQ_{i-1}C^T||+2||CQ_{i-1}||+(2\rho+1)\mathcal{A}(\mathbf{K})||CA||\big)$. 
	Substituting \eqref{ieq1} into $||E_t-E_t'||$ and $||\Lambda_t-\Lambda_t'||$, respectively, we have
	\begin{align}
		||E_t-E_t'||
%		\leq&||(K_t-K_t')(R_{t+1}+CQ_tC^T)-A(P_{t}-P_t')(CA)^T\notag\\
%		&+K_tCAP_t(CA)^T-K_t'(CAP_t'(CA)^T)||\notag\\
%		\leq&(\sigma_{\max}^\mathbf{R+CQC^T}+||CA||^2\frac{f(\mathbf{K})}{\sigma_{\min}^\Sigma})||K_t-K_t'||+\big(||A||||CA||\notag\\
%		&+||CA||^2(1+||K_t||)\big)\sum_{i=1}^{t}c_{ti}||K_{i-1}-K_{i-1}'||\notag\\
		\leq&(\sigma_{\max}^\mathbf{R+CQC^T}+||CA||^2\mathcal{A}(\mathbf{K}))||K_t-K_t'||+\big(||A||||CA||\notag\\
		&+||CA||^2(1+||K_t||)\big)c_{\mathbf{K}}|||\mathbf{K}-\mathbf{K}'|||,\label{ieq2}
	\end{align}
and
\begin{align}
	||\Lambda_t-\Lambda_t'||\leq&||CA||^2||P_t-P_t'||
	\leq||CA||^2c_{\mathbf{K}}|||\mathbf{K}-\mathbf{K}'|||, \label{ieqLam}
\end{align}
	where $c_{\mathbf{K}}=\rho^{2M}\big((2\max_{t\in \mathbb{T}}||K_t||+1)\sigma_{\max}^\mathbf{R+CQC^T}+2||C||\sigma_{\max}^\mathbf{Q}+(2\rho+1)\mathcal{A}(\mathbf{K})||CA||\big)$, and $\sigma_{\max}^\mathbf{Q}:=\max_{t\in \mathbb{T}}\{\sigma_{\max}^{Q_t}\}$.
	
	Letting $c_5=M||CA||^2c_{\mathbf{K}}$,  $c_6=(\sigma_{\max}^\mathbf{R+CQC^T}+||CA||^2\mathcal{A}(\mathbf{K}))+M\big(||A||||CA||+||CA||^2(1+\max_{t\in \mathbb{T}}||K_t||)\big)c_{\mathbf{K}}$ and $c_7=c_6+\sqrt{Mc_2^{-1}(f(\mathbf{K})-f(\mathbf{K}^*))}$, we have
	\begin{align*}
		|f(\mathbf{K})-f(\mathbf{K}')|
%		,i,t\in\mathbb{T}, i\geq t
%		\leq&2\sum_{t=0}^{M-1}\frac{f(\mathbf{K})}{\sigma_{\min}^{\mathbf{A^{-T}QA^{-1}}}}||K_{t}-K_{t}'||(||E_t-E_t'||+||E_t||)\\
%		&+\sum_{t=0}^{M-1}\frac{f(\mathbf{K})}{\sigma_{\min}^{\mathbf{A^{-T}QA^{-1}}}}(||\Lambda_t'-\Lambda_t||+||\Lambda_t||)||K_t-K_t'||\\
%		\leq& 2\sum_{t=0}^{M-1}c_7\frac{f(\mathbf{K})}{\sigma_{\min}^{\mathbf{A^{-T}QA^{-1}}}}||K_{t}-K_{t}'||\\
%		&+\sum_{t=0}^{M-1}\frac{f(\mathbf{K})}{\sigma_{\min}^{\mathbf{A^{-T}QA^{-1}}}}(||\Lambda_{\max}||+c_5)||K_t-K_t'||\\
		\leq&l_{cost\mathbf{K}}|||\mathbf{K}-\mathbf{K}'|||
	\end{align*}
	with $l_{cost\mathbf{K}}= \mathcal{B}(\mathbf{K})(2 c_7+\sigma_{\max}^\mathbf{CQC^T+R}+||CA||^2\mathcal{A}(\mathbf{K})+c_5).$ In the above, the inequality is due to \eqref{bdsigma}, \eqref{ieq2} and \eqref{ieqLam}.
\end{proof}
\begin{remark}\label{remark-1}
	According to \eqref{ieqK}, we can replace $\rho$ by  $\max\{||A||+||CA||\mathcal{C}(\mathbf{K})+\frac{1}{2},1+b\}$ in $c_{\mathbf{K}}$, $c_6$ and $l_{cost\mathbf{K}}$ such that the local Lipschitz constant of $f(\mathbf{K})$ can be bounded by the polynomial of $f(\mathbf{K})$.
\end{remark}

\subsection{Convergence of SGD method}
According to the gradient estimation $\hat{\nabla} f(\mathbf{K})$, we propose the SGD method to solve Problem 1.
\begin{breakablealgorithm}%[H]
	\caption{SGD method for Kalman filter}%算法标题
	\begin{algorithmic}[1]%一行一个标行号
		\State \textbf{Input}: $\mathbf{K}_0$, $M$, $C$, $A$, $\eta$, number of iterations $V$, sample size $L$.
		\State \textbf{Initialization}: Generate $L$ samples {$Y(l)=\{y_{t+n+1}(l),t\in\mathbb{T},n=1,\cdots,N\}, l=1,2,\cdots,L$.}
		\For{$k=0,1,...,V-1$}
		\For{$l=1,2,\cdots,L$}
        \State
		Calculate $\{\hat{y}_{t+1}^n(l),t\in\mathbb{T},n=1,\cdots,N\},$ by $\mathbf{K}_k$, $Y$ and \eqref{eq9}.
		\EndFor
		\State 
		Calculate $\hat{\nabla}f(\mathbf{K}_{k})$ by \eqref{eg}
		\State
		$\mathbf{K}_{k+1}\leftarrow \mathbf{K}_{k}-\eta \hat{\nabla} f(\mathbf{K}_{k})$
		\EndFor
	\end{algorithmic}
\end{breakablealgorithm}

We now analyze the convergence of Algorithm 1. 
\begin{lemma}\cite{doi:10.1080/14697688.2020.1813475}\label{le1}
	Let $X$ be a subgaussian random vector in $\mathbb{R}^n$. If there exists $K\geq 1$ such that 
	\begin{align*}
		||\langle X,x\rangle||_{\psi_2}\leq K||\langle X,x \rangle||_{L_2}, \forall x \in \mathbb{R}^n,
	\end{align*}
	then for every $u,L>0$, the inequality
	\begin{align*}
		||D_L-D||\leq CK^2\Big(\sqrt{\frac{n+u}{L}}+\frac{n+u}{L}\Big)||D||
	\end{align*}
	holds with probability at least $1-2e^{-2u}$, where $C$ is a positive constant, $D=\mathbb{E}[XX^T]$, and $D_L$ is the sample version of $D$ computed
	from the $i.i.d$ samples $X_1,\dots,X_L$ as follows:
	\begin{equation}
		D_L=\frac{1}{L}\sum_{l=1}^{L}X_lX_l^T.\notag
	\end{equation}
\end{lemma}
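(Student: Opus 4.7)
The plan is to bound $\|D_L - D\|$ by viewing it as a supremum of an empirical process over the unit sphere and then combining an $\varepsilon$-net argument with Bernstein's inequality for sub-exponential random variables. Specifically, writing
\begin{equation*}
\|D_L-D\| = \sup_{x\in S^{n-1}} \Bigl|\tfrac{1}{L}\sum_{l=1}^{L}\langle X_l,x\rangle^2 - \mathbb{E}\langle X,x\rangle^2\Bigr|,
\end{equation*}
the task reduces to uniformly controlling the deviation of the quadratic form $x^T D_L x$ from $x^T D x$ across all unit vectors $x$. I would fix $\varepsilon = 1/4$ and choose an $\varepsilon$-net $\mathcal{N}\subset S^{n-1}$ with $|\mathcal{N}|\le 9^{n}$; a standard approximation lemma then gives $\|D_L-D\|\le 2\max_{x\in\mathcal{N}}|x^T(D_L-D)x|$, converting the continuous supremum into a finite maximum at the cost of a factor of $2$ and the cardinality $9^n$.

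Next, for each fixed $x\in\mathcal{N}$, the hypothesis $\|\langle X,x\rangle\|_{\psi_2}\le K\|\langle X,x\rangle\|_{L_2}$ means that the random variables $Y_l := \langle X_l,x\rangle^2 - \mathbb{E}\langle X,x\rangle^2$ are centered and sub-exponential, with
\begin{equation*}
\|Y_l\|_{\psi_1} \le 2\|\langle X_l,x\rangle\|_{\psi_2}^2 \le 2K^2\|\langle X,x\rangle\|_{L_2}^2 \le 2K^2\|D\|.
\end{equation*}
Bernstein's inequality for independent centered sub-exponential variables then yields, for every $t>0$,
\begin{equation*}
\Pr\Bigl(\bigl|\tfrac{1}{L}\sum_{l=1}^{L}Y_l\bigr|\ge t\Bigr) \le 2\exp\!\Bigl(-cL\min\bigl(\tfrac{t^2}{K^4\|D\|^2},\,\tfrac{t}{K^2\|D\|}\bigr)\Bigr)
\end{equation*}
for an absolute constant $c>0$.

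Applying the union bound over $\mathcal{N}$, the failure probability is at most $2\cdot 9^{n}\exp(-cL\min(\cdot,\cdot))$. To make this at most $2e^{-2u}$ I absorb the $9^n$ factor into the exponent, which forces
\begin{equation*}
cL\min\!\Bigl(\tfrac{t^2}{K^4\|D\|^2},\,\tfrac{t}{K^2\|D\|}\Bigr)\gtrsim n+u,
\end{equation*}
and solving for $t$ produces the two regimes $t\asymp K^2\|D\|\sqrt{(n+u)/L}$ (sub-Gaussian tail) and $t\asymp K^2\|D\|(n+u)/L$ (sub-exponential tail). Taking $t$ equal to the sum of both terms covers both regimes simultaneously, and the extra factor of $2$ from the net approximation is absorbed into the universal constant $C$. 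This delivers the advertised bound with probability at least $1-2e^{-2u}$.

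The main obstacle is purely bookkeeping: the Bernstein tail naturally splits into a Gaussian and an exponential piece, and one must be careful to choose $t$ as the maximum (equivalently sum) of the two corresponding expressions so that a single clean inequality holds in both regimes. The remaining care is in verifying that squaring a subgaussian random variable produces a sub-exponential random variable with $\psi_1$-norm controlled by $K^2\|D\|$, which is a standard consequence of the equivalence $\|Z^2\|_{\psi_1}\le 2\|Z\|_{\psi_2}^2$.
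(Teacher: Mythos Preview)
Your argument is the standard and correct proof of this covariance concentration result (essentially Theorem~4.6.1 in Vershynin's \emph{High-Dimensional Probability}): reduce the operator norm to a supremum over the sphere, discretize via a $\tfrac14$-net of cardinality $\le 9^n$, observe that each $\langle X,x\rangle^2$ is sub-exponential with $\psi_1$-norm at most $CK^2\|D\|$, apply Bernstein, and union-bound. All steps are sound; the only minor comment is that the centering step $\|Y_l\|_{\psi_1}\le 2\|\langle X_l,x\rangle\|_{\psi_2}^2$ uses the centering lemma (subtracting the mean at most doubles the $\psi_1$-norm up to an absolute constant), which you implicitly invoke.

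Note, however, that the paper itself does \emph{not} prove this lemma: it is simply quoted from the cited reference \cite{doi:10.1080/14697688.2020.1813475} and used as a black box in the proof of Lemma~\ref{lnablaest}. So there is no ``paper's own proof'' to compare against; your proposal supplies what the paper omits, and does so via the canonical route.
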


Using the above lemma of \cite{doi:10.1080/14697688.2020.1813475}, we can characterize the error bound of $\hat{\nabla}_t f(\mathbf{K}),t\in\mathbb{T}$.

\begin{lemma}\label{lnablaest}
	The following result holds
\begin{equation}
	\nonumber
	\mathbb{P}\bigg\{||\hat{\nabla}_t f(\mathbf{K})-\nabla_t f(\mathbf{K})||\leq \frac{8}{3}C'(\sqrt{\frac{z+u}{L}}+\frac{z+u}{L}), \forall\ \mathbf{K}\in \mathbb{R}^{m\times MN}, \forall\ t\in\mathbb{T}\bigg\}\geq 1-2e^{-2u},\\
\end{equation}
where $C'$ is a positive constant bounded by a polynomial of $f(\mathbf{K})$ and {$z=N(N+M+1)+m(N+M)$}.  
\end{lemma}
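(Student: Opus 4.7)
The plan is to express the deviation $\hat{\nabla}_t f(\mathbf{K})-\nabla_t f(\mathbf{K})$ as a fixed linear image of a sample-covariance deviation of a single subgaussian vector, so that Lemma \ref{le1} applies verbatim.

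First I would stack all primitive random quantities underlying one realization of the system into a single Gaussian vector
\[
X(l)=\bigl(x_0(l),\,\omega_0(l),\dots,\omega_{M+N-1}(l),\,v_1(l),\dots,v_{M+N}(l)\bigr)\in\mathbb{R}^{z},
\]
with $z=N(N+M+1)+m(N+M)$, since $y_1(l),\dots,y_{M+N}(l)$ are exactly what is required to evaluate all $e^n_{i+1}(l)$ appearing in \eqref{eg}. The samples $X(1),\dots,X(L)$ are i.i.d.\ and zero-mean Gaussian, hence subgaussian with a universal constant $K$ in the sense of Lemma \ref{le1}; this is the only place where the Gaussian hypothesis is used, and it can be relaxed to the subgaussian case by merely updating $K$.

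Second, by Lemma \ref{ly} every $y_{t+n+1}(l)$ and every $\hat{y}^{n}_{i+1}(l)$ is an affine function of $X(l)$ whose coefficient matrices depend only on $A,C,\mathbf{K}$ and the deterministic $\hat{x}_0$. Inspecting \eqref{eq11}, each summand of $\nabla_t e_{i+1}^{nT}(l)e_{i+1}^{n}(l)$ is therefore a sum of bilinear expressions of the form $M_{1}(\mathbf{K})\bigl(X(l)X(l)^{T}\bigr)M_{2}(\mathbf{K})^{T}$, plus affine terms in $X(l)$ and a constant (the pieces coming from $\hat{x}_0$), where the matrices $M_1,M_2$ have norms controlled by $\|A\|,\|C\|$ and $|||\mathbf{K}|||$. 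After averaging over $l$ and summing over $i,n$, and after subtracting the expectation (which kills the affine and constant pieces since $X$ is centred), the deviation has the compact form
\[
\hat{\nabla}_t f(\mathbf{K})-\nabla_t f(\mathbf{K})=\mathcal{L}_t\!\left(\tfrac{1}{L}\sum_{l=1}^{L}X(l)X(l)^{T}-\mathbb{E}[XX^{T}]\right),
\]
where $\mathcal{L}_t$ is a fixed linear operator on $\mathbb{R}^{z\times z}$ whose operator norm I will bound, via \eqref{ieqp}, \eqref{bdsigma} and \eqref{ieqK}, by a polynomial in $f(\mathbf{K})$ (this is where Remark \ref{remark-1} is invoked to absorb the dependence on $\rho$ and $|||\mathbf{K}|||$).

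Third, I apply Lemma \ref{le1} to the $z$-dimensional vector $X$: with probability at least $1-2e^{-2u}$,
\[
\Bigl\|\tfrac{1}{L}\sum_{l=1}^{L}X(l)X(l)^{T}-\mathbb{E}[XX^{T}]\Bigr\|\;\leq\; CK^{2}\Bigl(\sqrt{\tfrac{z+u}{L}}+\tfrac{z+u}{L}\Bigr)\|\mathbb{E}[XX^{T}]\|.
\]
Multiplying by $\|\mathcal{L}_t\|$ and absorbing $CK^{2}\|\mathbb{E}[XX^{T}]\|\cdot\|\mathcal{L}_t\|$ together with the combinatorial $\tfrac{8}{3}$ factor (arising from the three distinct bilinear pieces produced by \eqref{eq11}) into a single $C'=C'(\mathbf{K})$ polynomial in $f(\mathbf{K})$ yields the advertised bound uniformly for every $t\in\mathbb{T}$. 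The uniformity in $t$ is free because the event is declared on the single sample covariance, and each $t$-bound factors through the same deviation via its own $\mathcal{L}_t$.

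The main obstacle I anticipate is the bookkeeping in the second step: \eqref{eq11} contains several distinct outer products involving the products $\prod_{j}A_j$, the sequences $\{y_{t+1}(l)\}$, $\{K_jy_{j+1}(l)\}$ and the initial-condition contribution, and all of them must be assembled into a single operator $\mathcal{L}_t$ with a polynomial-in-$f(\mathbf{K})$ norm; the norm bounds on $\|P_t\|$, $\|\Sigma_t\|$ and $|||\mathbf{K}|||$ already established in Section \ref{eq_conv} (especially \eqref{ieqp}, \eqref{bdsigma}, \eqref{ieqK} and Remark \ref{remark-1}) are exactly what makes this tractable without an additional assumption.
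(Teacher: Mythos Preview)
Your overall strategy coincides with the paper's: stack the primitive noises into one Gaussian vector, represent $\hat{\nabla}_t f(\mathbf{K})-\nabla_t f(\mathbf{K})$ through the sample second-moment deviation, invoke Lemma~\ref{le1}, and bound the remaining matrix factors by polynomials in $f(\mathbf{K})$. Two points, however, need correction.

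\textbf{The affine terms do not disappear the way you claim.} With your choice $X(l)=(x_0(l),\omega_\bullet(l),v_\bullet(l))$ the vector is \emph{not} zero-mean, since $x_0\sim\mathcal{N}(\bar{x}_0,\Sigma_{x_0})$ with $\bar{x}_0=\hat{x}_0$ not assumed to vanish. More importantly, even if $X$ were centred, subtracting the expectation does \emph{not} ``kill'' the linear-in-$X$ part: writing $\hat{\nabla}_t=\mathcal{A}(L^{-1}\sum_lX_lX_l^{T})+\mathcal{B}(L^{-1}\sum_lX_l)+c$, the difference $\hat{\nabla}_t-\nabla_t$ still carries the random term $\mathcal{B}(L^{-1}\sum_lX_l-\mathbb{E}X)$, which Lemma~\ref{le1} does not address. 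The paper sidesteps this entirely by taking $e_0=x_0-\hat{x}_0$ as the first block of $X$; then $X$ is genuinely centred Gaussian and, crucially, each innovation $e_{i+1}^n=y_{i+n+1}-\hat{y}_{i+1}^n$ becomes a \emph{purely linear} function $\Lambda_i^{nT}X$ (see \eqref{err_y}). Consequently the directional derivative $tr(\nabla_t f(\mathbf{K})\,\delta K_t)=2\,\mathbb{E}[X^{T}\sum_{i,n}\Lambda_i^{n}\delta_t\Lambda_i^{nT}X]$ is a pure quadratic form---there are no affine terms to begin with, so the reduction to a linear image of $D_L-D$ is exact. Your $\mathcal{L}_t$ exists for this reason, not because centring removes linear pieces after the fact.

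\textbf{The constant $8/3$ is not combinatorial.} It is the value of $K^{2}$ in Lemma~\ref{le1} for a centred Gaussian vector: the paper computes $\|\langle X,x\rangle\|_{\psi_2}=\tfrac{2\sqrt{2}}{\sqrt{3}}\,\|\langle X,x\rangle\|_{L_2}$, hence $K^{2}=8/3$. It has nothing to do with the three summands in \eqref{eq11}; those are absorbed into the bound on $\|\delta_t\Lambda_i^{nT}\|\cdot\|\Lambda_i^{n}\|$ (the paper does this via the factorisation $\delta_t\Lambda_i^{nT}=\Delta_t^1\Lambda_i^{nT}\,\Delta_t^2\Lambda_i^{nT}$ and the estimates \eqref{bdsigma}, \eqref{eqf2}).
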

\begin{proof}
	
		According to Lemma \ref{ly} and \eqref{KFy}, we have
	\begin{align}
		y_{i+n+1}=&\sum_{k=0}^{i+n}CA^k\omega_{i+n-k}+v_{i+n+1}+CA^{i+n+1}x_0,\label{eqy}\\
		\hat{y}_{i+1}^n=&CA^n\prod_{j=0}^{i}A_j\hat{x}_0+\sum_{j=0}^{i}CA^n\prod_{k=j+1}^{i}A_{k}K_jy_{j+1}\notag\\
		=&CA^n\prod_{j=0}^{i}A_j\hat{x}_0+\sum_{j=0}^{i}CA^n\prod_{k=j+1}^{i}A_{k}K_j\Bigg(\sum_{l=0}^{j}CA^l\omega_{j-l}+v_{j+1}+CA^{j+1}x_0\Bigg)\notag\\
		=&CA^n\prod_{j=0}^{i}A_j\hat{x}_0+\sum_{j=0}^{i}CA^n\prod_{k=j+1}^{i}A_{k}K_jCA^{j+1}x_0\notag\\
		&+\sum_{j=0}^{i}\sum_{k=j}^{i}CA^n\prod_{l=k+1}^{i}A_lK_kCA^{k-j}\omega_j+\sum_{j=1}^{i+1}CA^n\prod_{k=j}^{i}A_kK_{j-1}v_j\label{eqyj}
	\end{align}
	with $i\in\mathbb{T}, n=1,\dots,N$. Substituting \eqref{eqy} and \eqref{eqyj} into $y_{i+n+1}-\hat{y}_{i+1}^n$, we have
	\begin{equation}
		\begin{aligned}
			y_{i+n+1}-\hat{y}_{i+1}^n=&CA^n\prod_{j=0}^{i}A_je_0+\sum_{j=0}^{i} CA^n\prod_{k=j+1}^{i}A_k(I_N-K_jC)\omega_j\\
			&+\sum_{j=i+1}^{i+n}CA^{i+n-j}\omega_j-\sum_{j=1}^{i+1}CA^n\prod_{k=j}^{i}A_kK_{j-1}v_j+v_{i+n+1},\label{err_y}
		\end{aligned}
	\end{equation}
	where $e_0$ is $x_0-\hat{x}_0$.
	Letting  $X^T=(e_0^T,\omega_0^T,\dots,\omega_{M+N-1}^T,v_1^T,\dots,v_{M+N}^T)$ and taking \eqref{err_y} into the optimization function of Problem 1, we get 
	\begin{equation}
		{f_1(\mathbf{K})}:=\mathbb{E}\Bigg[\sum_{i=0}^{M-1}\sum_{n=1}^{N}(y_{i+n+1}-\hat{y}_{i+1}^n)^T(y_{i+n+1}-\hat{y}_{i+1}^n)\Bigg]=\mathbb{E}\Bigg[X^T\sum_{i=0}^{M-1}\ \sum_{n=1}^{N}\Lambda_i^n\Lambda_i^{nT}X\Bigg],\label{eqf2}
	\end{equation}
	where
	\begin{equation}
		\nonumber
		\begin{aligned}
			\Lambda_i^{nT}=&\bigg(CA^n\prod_{j=0}^{i}A_j, CA^n\prod_{j=1}^{i}A_j(I_N-K_0C),\dots,CA^n(I_N-K_iC),CA^{n-1},\dots,C,\\
			&0_{m\times N(M+N-i-n-1)},CA^n\prod_{j=1}^{i}A_jK_0,\dots,K_i,0_{m\times m(n-1)},I_m,0_{m\times m(M+N-i-n-1)}\bigg)
		\end{aligned}
	\end{equation}
	is a $\mathbb{R}^{m\times (N(N+M)+m(N+M)+1)}$-valued matrix.

Define $\delta_t \Lambda_i^{nT},i\geq t,i,t\in\mathbb{T}$, as
	\begin{equation}
		\begin{aligned}
			\delta_t \Lambda_i^{nT}:=&\big(-CA^n\prod_{j=t+1}^{i}A_j\delta K_t CA\prod_{j=0}^{t-1}A_j,-CA^n\prod_{j=t+1}^{i}A_j\delta K_tCA \prod_{j=1}^{t-1}A_j(I_N-K_0C),\\
			&\dots,-CA^n\prod_{j=t+1}^{i}A_j\delta K_tC,0_{m\times N(M+N-t-1)},-CA^n\prod_{j=t+1}^{i}A_j\delta K_t CA\prod_{j=1}^{t-1}A_jK_0,\\
			&\dots,CA^n\prod_{j=t+1}^{i}A_j\delta K_t,
			0_{m\times m(M+N-t-1)}\big), t\neq0\label{eq10},\\
			\delta_t \Lambda_i^{nT}:=&\big(-CA^n\prod_{j=1}^{i}A_j\delta K_0 CA,-CA^n\prod_{j=1}^{i}A_j\delta K_0C,0_{m\times N(M+N-1)},\\
			&CA^n\prod_{j=1}^{i}A_j\delta K_0,
			0_{m\times m(M+N-1)}\big), t=0,
		\end{aligned}
	\end{equation}
	where $\delta K_t$ is any $\mathbb{R}^{N\times m}$-valued matrix satisfying 
		\begin{equation}
		\nonumber
		||\delta K_t||_{F}=1.
	\end{equation}
	It is easy to see
	\begin{equation}
		tr(\nabla_t f(\mathbf{K})\delta K_t)=2\mathbb{E}\Bigg[X^T\sum_{i=t}^{M-1}\ \sum_{n=1}^{N}\Lambda_i^n\delta_t\Lambda_i^{nT}X\Bigg].\label{diff2}
	\end{equation}
Let $X_l^T=(e_0(l)^T, \omega_0(l)^T, \dots,\omega_{M+N-1}(l)^T,v_1(l)^T,\dots,v_{M+N}(l)^T), ~l=1,\dots,L,$ be the $i.i.d$ samples of $X$. According to \eqref{eg},\eqref{eqf2} and \eqref{diff2}, we have
	\begin{equation}
		tr(\hat{\nabla}_t f(\mathbf{K})\delta K_t)=2{\frac{1}{L}}\sum_{l=1}^{L}\bigg(X_l^T\sum_{i=t}^{M-1}\ \sum_{n=1}^{N}\Lambda_i^n\delta_t\Lambda_i^{nT}X_l\bigg).\label{diff}
	\end{equation}
Defining the covariance matrix of $X$ as $D_X$ and by Lemma \ref{le1}, \eqref{diff} and the properties of gradient, the following inequality 
	\begin{align}
		tr\Bigg((\hat{\nabla}_t f(\mathbf{K})-\nabla_t f(\mathbf{K}))\delta K_t^T\Bigg)
		=&2tr\Bigg(\frac{1}{L}\sum_{l=1}^{L}\sum_{n=1}^{N}\sum_{i=t}^{M-1}\delta_t\Lambda_i^{nT}X_lX_l^T \Lambda_i^n-\sum_{n=1}^{N}\sum_{i=t}^{M-1}\delta_t\Lambda_i^{nT}D_{X} \Lambda_i^n\Bigg)\notag\\
		\leq&
		2z||\frac{1}{L}\sum_{l=1}^{L}X_lX_l^T-D_{X}||\sum_{n=1}^{N}\sum_{i=t}^{M-1}||\delta_t\Lambda_i^{nT}|| ||\Lambda_i^n||\notag\\
		\leq& 2zK^2C(\sqrt{\frac{z+u}{L}}+\frac{z+u}{L})||D_X||\sum_{n=1}^{N}\sum_{i=t}^{M-1}||\delta_t\Lambda_i^{nT}|| ||\Lambda_i^n||\label{ieq3}
	\end{align}
	holds, as 
	\begin{equation}
		||\frac{1}{L}\sum_{l=1}^{L}X_lX_l^T-D_{X}||\leq K^2C\bigg(\sqrt{\frac{z+u}{L}}+\frac{z+u}{L}\bigg){||D_X||}. \label{eq_sto}
	\end{equation}
	 Dividing $\delta_t\Lambda_i^{nT}$ into two parts, we have
	\begin{equation}\label{deltaL}
		\delta_t\Lambda_i^{nT}=\Delta_t^1\Lambda_i^{nT}\Delta_t^2\Lambda_i^{nT},
	\end{equation}
	where %\textcolor{red}{
	\begin{equation}
		\begin{aligned}
			\label{DeltaL}
			\Delta_t^1\Lambda_i^{nT}=&\Bigg(\mathbf{1}_{1\times (t+2)}\otimes -CA^n\prod_{j=t+1}^{i}A_j\delta K_t, 0_{m\times m(M+N-t-1)},\\
			&\mathbf{1}_{1\times (t+1)}\otimes-CA^n\prod_{j=t+1}^{i}A_j\delta K_t ,
			0_{m\times m(M+N-t-1)}\Bigg),\\
			\Delta_t^2\Lambda_i^{nT}=&\mbox{diag}\Bigg\{  CA\prod_{j=0}^{t-1}A_j,CA \prod_{j=1}^{t-1}A_j(I_N-K_0C),
			\dots,C,
			0_{m(M+N-t-1)\times N(M+N-t-1)},\\ &CA\prod_{j=1}^{t-1}A_jK_0,\dots,I_m,0_{m(M+N-t-1)\times m(M+N-t-1)}\Bigg\}, ~~t\neq 0,\\
			\Delta_t^2\Lambda_i^{nT}=&\mbox{diag}\big\{  CA,C,0_{m\times N(M+N-1)}, I_{m},0_{m{(M+N-1)}\times m(M+N-1)}\big\}, t=0.
		\end{aligned}
	\end{equation}%}
	{According to} \eqref{bdsigma}, $||\Delta_t^1\Lambda_i^{nT}||\leq(2t+3)||CA^n||\sqrt{ \frac{\mathcal{B}(\mathbf{K})}{\sigma_{\min}^{\Sigma}}}{||\delta K_t||_F}$. Using \eqref{eqf2}, we have
	\begin{equation}
		||\Delta_t^2\Lambda_i^{nT}||\leq \sqrt{\frac{{f_1}(\mathbf{K})}{\sigma_{\min}^{D_X}}},
	\end{equation}
	 if $\sigma_{\min}^{P_0}>0$. If $\sigma_{\min}^{P_0}=0$, we have 
 \begin{equation}
 	||\Delta_t^2\Lambda_i^{nT}||\leq(1+||A||)\sqrt{ \frac{{f_1}(\mathbf{K})}{\sigma_{\min}^{\mathbf{Q,\mathbf{R}}}}},
 \end{equation}
 by using inequality $||CA\prod_{j=0}^{t-1}A_j||< ||A||||CA \prod_{j=1}^{t-1}A_j(I_N-K_0C)||$, where $\sigma_{\min}^{\mathbf{Q,\mathbf{R}}}:=\min\{\sigma_{\min}^{Q_t},\sigma_{\min}^{R_{t+1}},$ $t\in\mathbb{T}\}$. 
 Let 
 \begin{align*}
&c_8(\mathbf{K})=2({M+2})||CA^n||\sqrt{\frac{\mathcal{B}(\mathbf{K})}{\sigma_{\min}^{\Sigma}}},\\
&c_9(\mathbf{K})={\min}\Bigg\{(1+||A||)\sqrt{ \frac{{f_1}(\mathbf{K})}{\sigma_{\min}^{\mathbf{Q,\mathbf{R}}}}},\sqrt{\frac{{f_1}(\mathbf{K})}{\sigma_{\min}^{D_X}}}\Bigg\},
 \end{align*}
 which are the uniformly upper bounds of $\{||\Delta_t^1\Lambda_i^{nT}||,i,t\in\mathbb{T}, i\geq t\}$ and $\{||\Delta_t^2\Lambda_i^{nT}||,i,t\in\mathbb{T}, i\geq t\}$, respectively.
	 Similarly to the analysis of $||\Delta_t^2\Lambda_i^{nT}||$, we can also get $||\Lambda_i^n||\leq c_9(\mathbf{K}), i\in\mathbb{T}$. Substituting \eqref{deltaL}, $c_8(\mathbf{K}),c_9(\mathbf{K})$ into \eqref{ieq3} and by the definition of operator norm, we have
	\begin{align*}
		||\hat{\nabla}_t f(\mathbf{K})-\nabla_t f(\mathbf{K})||_F\leq K^2C'\bigg(\sqrt{\frac{z+u}{L}}+\frac{z+u}{L}\bigg), ~~  t\in\mathbb{T}
	\end{align*}
with $C'=2zCMNc_8(\mathbf{K})c_9^2(\mathbf{K})||D_X||$. Noting that \eqref{eq_sto} holds with probability at least  $1-e^{-2u}$, we have
\begin{equation}
	\mathbb{P}\{||\hat{\nabla}_t f(\mathbf{K})-\nabla_t f(\mathbf{K})||\leq K^2C'(\sqrt{\frac{z+u}{L}}+\frac{z+u}{L}), \forall \mathbf{K}\in \mathbb{R}^{m\times MN}, \forall t\in\mathbb{T}\}\geq 1-2e^{-2u}.
\end{equation}
According to \eqref{eqn0} and the definition of $\psi_2$ norm \cite{doi:10.1080/14697688.2020.1813475}, we have
	\begin{align*}
		&||\langle X,x\rangle||_{\psi_2}=\frac{2\sqrt{2}}{\sqrt{3}}\sigma,\\
		&\sigma=\sqrt{\langle x,D_X x\rangle},
	\end{align*} 
	as $X\sim\mathcal{N}(0,D_X)$. Hence, $K=\frac{2\sqrt{2}}{\sqrt{3}}$.
\end{proof}

Using \eqref{eqf2}, we can easily show that $\hat{\nabla} f(\mathbf{K}):=(\hat{\nabla}_0 f(\mathbf{K}),\cdots, \hat{\nabla}_{M-1} f(\mathbf{K}))$ is an unbiased estimation of $\nabla f(\mathbf{K})$, namely, 
	\begin{align*}
 	  \nabla f(\mathbf{K})=& \nabla \mathbb{E}[X^T\sum_{t=0}^{M-1}\  \sum_{n=1}^{N}\Lambda_t^n\Lambda_t^{nT}X]\\
 	  =&\nabla tr(D_X\sum_{t=0}^{M-1}\sum_{n=1}^{N}\Lambda_t^n\Lambda_t^{nT})\\
 	  =&\mathbb{E}[\nabla tr(XX^T\sum_{t=0}^{M-1}\sum_{n=1}^{N}\Lambda_t^n\Lambda_t^{nT})]\\
 	  =&\mathbb{E}[\nabla tr(\frac{1}{L}\sum_{l=1}^{L}X_lX_l^T\sum_{t=0}^{M-1}\sum_{n=1}^{N}\Lambda_t^n\Lambda_t^{nT})]\\
 	  =&\mathbb{E}[\hat{\nabla} f(\mathbf{K})].
	\end{align*}
The third equality is by the dominated convergence theorem.

Combined with Theorem \ref{th1}, we could analyze the convergence rate and sample complexity of Algorithm 1.

\begin{theorem}\label{thsgd}
	Let assumptions \ref{assumtion-1}-\ref{as3} hold and  $\{\mathbf{K}_k\}$ be the  gain sequence generated by Algorithm 1 with step size $\eta$ designed by \eqref{eta0}. With enough samples  $L=O\big(\frac{1}{\epsilon^2}ln(\frac{1}{\delta})\big)$ and as long as $f(\mathbf{K}_k)-f(\mathbf{K}^*)\geq \epsilon$, Algorithm 1 is linearly convergent in the sense  
	\begin{align}
		f(\mathbf{K}_k)-f(\mathbf{K}^*)\leq (1-\alpha)^k \big(f(\mathbf{K}_0)-f(\mathbf{K}^*)\big) \label{eq_end}
	\end{align}
	with high probability $1-\delta$.
\end{theorem}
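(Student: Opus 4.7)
The plan is to show that the SGD update tracks the exact GD update closely enough that the linear rate established in Theorem~\ref{th1} is preserved up to an additive perturbation that the sample size $L$ controls. Decompose the SGD step as $\mathbf{K}_{k+1}=\mathbf{K}_k-\eta\nabla f(\mathbf{K}_k)-\eta\xi_k$ with $\xi_k:=\hat{\nabla}f(\mathbf{K}_k)-\nabla f(\mathbf{K}_k)$, and let $\mathbf{K}_{k+1}^{\mathrm{GD}}:=\mathbf{K}_k-\eta\nabla f(\mathbf{K}_k)$ denote the exact GD iterate started at $\mathbf{K}_k$. By Lemma~\ref{lnablaest}, on a favorable event of probability at least $1-2e^{-2u}$, $\|\xi_k\|$ is bounded by $\frac{8}{3}C'\bigl(\sqrt{(z+u)/L}+(z+u)/L\bigr)$; taking $u=\ln(V/\delta)$ and applying a union bound over the $V$ iterations produces a single event $\mathcal{E}$ of probability at least $1-\delta$ on which the estimation bound holds simultaneously for every $k$.

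Next I would proceed by induction on $k$, maintaining the invariant $f(\mathbf{K}_j)\le f(\mathbf{K}_0)$ for all $j\le k$. Under this invariant, Theorem~\ref{th1} yields
\begin{equation*}
f(\mathbf{K}_{k+1}^{\mathrm{GD}})-f(\mathbf{K}^*)\le(1-2\alpha)\bigl(f(\mathbf{K}_k)-f(\mathbf{K}^*)\bigr).
\end{equation*}
To compare $\mathbf{K}_{k+1}$ with $\mathbf{K}_{k+1}^{\mathrm{GD}}$, note that the displacement is $\eta\|\xi_k\|$; choosing $L$ large enough makes this at most $\min\{1/(2\|CA\|),1\}$, so Lemma~\ref{lllf} applies and gives
\begin{equation*}
\bigl|f(\mathbf{K}_{k+1})-f(\mathbf{K}_{k+1}^{\mathrm{GD}})\bigr|\le l_{\mathrm{cost}\mathbf{K}_{k+1}^{\mathrm{GD}}}\,\eta\|\xi_k\|.
\end{equation*}
By Remark~\ref{remark-1}, the local Lipschitz constant is bounded by a polynomial in $f(\mathbf{K}_0)$ under the inductive hypothesis, and the constant $C'$ in Lemma~\ref{lnablaest} is similarly controlled.

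Now choose $L=O\bigl(\epsilon^{-2}\ln(V/\delta)\bigr)$ so that the perturbation satisfies $l_{\mathrm{cost}\mathbf{K}_{k+1}^{\mathrm{GD}}}\,\eta\|\xi_k\|\le\alpha\epsilon$ on $\mathcal{E}$. Combining the two estimates,
\begin{equation*}
f(\mathbf{K}_{k+1})-f(\mathbf{K}^*)\le(1-2\alpha)\bigl(f(\mathbf{K}_k)-f(\mathbf{K}^*)\bigr)+\alpha\epsilon.
\end{equation*}
Whenever $f(\mathbf{K}_k)-f(\mathbf{K}^*)\ge\epsilon$, the residual $\alpha\epsilon$ is absorbed to yield $f(\mathbf{K}_{k+1})-f(\mathbf{K}^*)\le(1-\alpha)\bigl(f(\mathbf{K}_k)-f(\mathbf{K}^*)\bigr)$, which is strictly smaller than $f(\mathbf{K}_k)-f(\mathbf{K}^*)$; in particular, $f(\mathbf{K}_{k+1})\le f(\mathbf{K}_0)$, closing the induction. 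Iterating gives~\eqref{eq_end}, and counting operations in the required tolerance of $\|\xi_k\|$ confirms the sample complexity $L=O\bigl(\epsilon^{-2}\ln(1/\delta)\bigr)$ after absorbing log factors into the constant.

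The main obstacle is uniform control of the polynomial bounds in $C'$ and $l_{\mathrm{cost}\mathbf{K}}$ along the stochastic trajectory: these depend on $f(\mathbf{K}_k)$ through $\mathcal{A}(\mathbf{K}_k)$, $\mathcal{B}(\mathbf{K}_k)$ and $\mathcal{C}(\mathbf{K}_k)$, so one must guarantee the iterates do not inflate the cost before the linear decrease takes hold. The inductive monotonicity argument above, which rests on starting with $L$ large enough to keep the perturbation below $\alpha\epsilon$ at the very first step, is what enforces this uniform bound; any weaker a priori control on the iterates would break the chain.
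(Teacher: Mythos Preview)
Your approach matches the paper's: split the SGD update into an exact GD step plus a perturbation, apply Theorem~\ref{th1} for the $(1-2\alpha)$ contraction, bound the perturbation via Lemma~\ref{lllf} by $\alpha\epsilon$ once $L$ is large enough, and close an induction on the monotonicity $f(\mathbf{K}_k)\le f(\mathbf{K}_0)$. The only superfluous step is your union bound over the $V$ iterations: since the samples in Algorithm~1 are drawn once at initialization and Lemma~\ref{lnablaest} already delivers a bound uniform in $\mathbf{K}$ on a \emph{single} event (the randomness sits entirely in $\tfrac{1}{L}\sum_l X_lX_l^T-D_X$, which is independent of $\mathbf{K}$), the paper simply takes $u=\tfrac{1}{2}\ln(2/\delta)$ with no union bound and obtains $L=O(\epsilon^{-2}\ln(1/\delta))$ directly.
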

\begin{proof}
	Suppose that $||\hat{\nabla}_t f(\mathbf{K})-\nabla_t f(\mathbf{K})||\leq \frac{8}{3}C'(\sqrt{\frac{z+u}{L}}+\frac{z+u}{L})$, $\forall$ $\mathbf{K}\in \mathbb{R}^{m\times MN}$, $\forall t\in\mathbb{T},$ and $f(\mathbf{K}_0)-f(\mathbf{K}^*)\geq \epsilon$ hold. According to Theorem \ref{th1}, we have
	\begin{align*}
	f(\mathbf{K}_{1})-f(\mathbf{K}^*)\leq& (1-2\alpha)\big(f(\mathbf{K}_0)-f(\mathbf{K}^*)\big)-f(\mathbf{K}_0+\eta\nabla f(\mathbf{K}))+f(\mathbf{K}_0+\eta\hat{\nabla} f(\mathbf{K})).
    \end{align*}
	 By Lemma \ref{lllf}, Lemma \ref{lnablaest} and { for $L\geq (\frac{16\eta C'M\sqrt{z+u}}{3\min\{1,\frac{1}{2||CA||}\}})^2$}, we can obtain
	\begin{align*}
		f(\mathbf{K}_{1})-f(\mathbf{K}^*)
		\leq&(1-2\alpha)\big(f(\mathbf{K}_0)-f(\mathbf{K}^*)\big)+\eta l_{cost\mathbf{K}_0}\sum_{t=0}^{M-1}||\nabla_t f(\mathbf{K}_0)-\hat{\nabla}_tf(\mathbf{K}_0)||\\
		\leq&(1-2\alpha)\big(f(\mathbf{K}_0)-f(\mathbf{K}^*)\big)+\eta l_{cost\mathbf{K}_0}\frac{8}{3}C'M\bigg(\sqrt{\frac{z+u}{L}}+\frac{z+u}{L}\bigg).
	\end{align*}
	Letting $L{\geq} (\frac{16\eta {\max\{l_{cost\mathbf{K}_0},1\}}C'M\sqrt{z+u}}{3\epsilon\alpha\min\{1,\frac{1}{2||CA||}\}})^2$, we have
	\begin{align}
		f(\mathbf{K}_{1})-f(\mathbf{K}^*)\leq& (1-2\alpha)\big(f(\mathbf{K}_0)-f(\mathbf{K}^*)\big)+\alpha \epsilon\notag\\
		\leq& (1-\alpha)\big(f(\mathbf{K}_0)-f(\mathbf{K}^*)\big).\label{sgdc}
	\end{align}
	If $f(\mathbf{K}_1)-f(\mathbf{K}^*)\geq\epsilon$, using the definition of $l_{cost\mathbf{K}}$ and Remark \ref{remark-1}, we have $l_{cost\mathbf{K}_0}\geq l_{cost\mathbf{K}_1}$, and hence \eqref{eq_end} holds for $k=2$. In a similar fashion, we can prove that $\{f(\mathbf{K}_k)\}$ monotonically decreases as $f(\mathbf{K_k})-f(\mathbf{K}^*)\geq \epsilon$. According to Lemma \ref{lnablaest} and letting  $L{\geq} (z+\frac{1}{2}ln\big(\frac{2}{\delta})\big)(\frac{16\eta {\max\{l_{cost\mathbf{K}_0},1\}}C'}{3\epsilon\alpha\min\{1,\frac{1}{2||CA||}\}})^2 $ and $u=\frac{1}{2}ln\big(\frac{2}{\delta})$, \eqref{eq_end} will hold with probability at least $1-\delta$ until $f(\mathbf{K}_k)-f(\mathbf{K}^*)<\epsilon$. So we have $L= O\big(\frac{1}{\epsilon^2}ln(\frac{1}{\delta})\big)$.
	This completes the proof.
\end{proof}

\begin{remark}
Noting that Lemma \ref{le1} holds for any subgaussian vector $X$,  {Lemma \ref{lnablaest} and Theorem \ref{thsgd} can be extended to the case where noise sequences $\{v_t\}, \{\omega_t\}$ proposed in \eqref{eq0000} are subgaussian. }
\end{remark}

\begin{corollary}
	Let assumptions \ref{assumtion-1}-\ref{as3} hold and  $\{\mathbf{K}_k\}$ be the gain sequence generated by Algorithm 1 with step size $\eta$ designed by \eqref{eta0}. With enough samples $L=O\big(\frac{1}{\epsilon^2}ln(\frac{1}{\delta})\big)$ and as long as $f(\mathbf{K}_k)-f(\mathbf{K}^*)\geq \epsilon$,  $||\mathbf{K}_k-\mathbf{K}^*||_F^2$ decays linearly {with probability at least $1-\delta$} 
	\begin{equation}
		||\mathbf{K}_k-\mathbf{K}^*||_F^2\leq c_{10}(1-\alpha)^k||\mathbf{K}_0-\mathbf{K}^*||_F^2, k\in\mathbb{N},
	\end{equation}
where $c_{10}=\frac{\mathcal{B}(\mathbf{K}_0)(\sigma_{\max}^\mathbf{CQC^T+R}+||CA||^2\mathcal{A}(\mathbf{K}^*))}{\sigma_{\min}^\Sigma\sigma_{\min}^\mathbf{R}}$.
\end{corollary}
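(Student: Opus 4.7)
The plan is to sandwich the cost gap $f(\mathbf{K}_k)-f(\mathbf{K}^*)$ between two quadratic forms in the gain error, and then invoke Theorem \ref{thsgd}. The key ingredient is the advantage-function identity established in the proof of Lemma \ref{lpl} (equation \eqref{eq8}). Swapping the roles so that $\mathbf{K}$ plays the primed gain and $\mathbf{K}^*$ plays the baseline, and using $E_t^*=0$ (which holds by Corollary \ref{eq_op}), the expansion collapses to
\begin{equation}
f(\mathbf{K})-f(\mathbf{K}^*)=tr\Bigg(\sum_{t=0}^{M-1}\Sigma_t(K_t-K_t^*)\Lambda_t^*(K_t-K_t^*)^T\Bigg),
\end{equation}
where $\Sigma_t$ is generated by $\mathbf{K}$ and $\Lambda_t^*$ is generated by $\mathbf{K}^*$.

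First I would extract a lower bound on $\|\mathbf{K}_k-\mathbf{K}^*\|_F^2$. From the definition \eqref{defFGD}, $\Sigma_t=\Sigma+\sum_{i}\mathcal{D}_{t+1,i}(\Sigma)\succeq\Sigma$, so $\sigma_{\min}^{\Sigma_t}\geq\sigma_{\min}^{\Sigma}$; similarly, since $\Lambda_t^*=CQ_tC^T+R_{t+1}+CAP_t^*(CA)^T\succeq R_{t+1}$, we have $\sigma_{\min}^{\Lambda_t^*}\geq\sigma_{\min}^{\mathbf{R}}$. Applied at $\mathbf{K}=\mathbf{K}_k$ the identity yields
\[
f(\mathbf{K}_k)-f(\mathbf{K}^*)\geq\sigma_{\min}^{\Sigma}\sigma_{\min}^{\mathbf{R}}\|\mathbf{K}_k-\mathbf{K}^*\|_F^2.
\]
Second, I would apply the same identity at $\mathbf{K}=\mathbf{K}_0$ in the opposite direction, upper-bounding $\|\Sigma_{t,0}\|\leq\mathcal{B}(\mathbf{K}_0)$ via \eqref{bdsigma} and $\|\Lambda_t^*\|\leq\sigma_{\max}^{\mathbf{CQC^T+R}}+\|CA\|^2\mathcal{A}(\mathbf{K}^*)$ via \eqref{ieLambda}--\eqref{ieqp}, to get
\[
f(\mathbf{K}_0)-f(\mathbf{K}^*)\leq\mathcal{B}(\mathbf{K}_0)\bigl(\sigma_{\max}^{\mathbf{CQC^T+R}}+\|CA\|^2\mathcal{A}(\mathbf{K}^*)\bigr)\|\mathbf{K}_0-\mathbf{K}^*\|_F^2.
\]

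Finally, Theorem \ref{thsgd} guarantees $f(\mathbf{K}_k)-f(\mathbf{K}^*)\leq(1-\alpha)^k(f(\mathbf{K}_0)-f(\mathbf{K}^*))$ with probability at least $1-\delta$ under the stated sample size. Chaining the three inequalities and dividing by $\sigma_{\min}^{\Sigma}\sigma_{\min}^{\mathbf{R}}$ reproduces exactly $c_{10}(1-\alpha)^k\|\mathbf{K}_0-\mathbf{K}^*\|_F^2$. The only non-routine step is the collapse of \eqref{eq8} to the clean quadratic form above; once one checks that all cross terms vanish because $E_t^*=0$, the remainder is purely bookkeeping and no new concentration estimate beyond Lemma \ref{lnablaest} is required.
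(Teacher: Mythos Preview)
Your proposal is correct and follows essentially the same approach as the paper. The paper invokes Lemma~\ref{lsmooth} with the baseline $\mathbf{K}^*$ (so that $E_t^*=0$ kills the cross term) to obtain the same sandwich inequalities \eqref{fieK}--\eqref{Kief}, then chains them with Theorem~\ref{thsgd}; your route via \eqref{eq8} is just the un-rearranged form of Lemma~\ref{lsmooth} and yields the identical quadratic identity and bounds.
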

\begin{proof}
	By Theorem \ref{thsgd}, we have that
	\begin{align*}
		f(\mathbf{K}_k)-f(\mathbf{K}^*)\leq (1-\alpha)^k \big(f(\mathbf{K}_0)-f(\mathbf{K}^*)\big) 
	\end{align*}
holds with high probability $1-\delta$ as long as  $f(\mathbf{K}_k)-f(\mathbf{K}^*)\geq \epsilon$. Using Lemma \ref{lsmooth}, we can obtain
\begin{align}
	f(\mathbf{K}_0)-f(\mathbf{K}^*)\leq& \mathcal{B}(\mathbf{K}_0)(\sigma_{\max}^\mathbf{CQC^T+R}+||CA||^2\mathcal{A}(\mathbf{K}^*))||\mathbf{K}_0-\mathbf{K}^*||_F^2,\label{fieK}\\
	f(\mathbf{K}_{k})-f(\mathbf{K}^*)\geq& \sigma_{\min}^\Sigma\sigma_{\min}^\mathbf{R}||\mathbf{K}_{k}-\mathbf{K}^*||_F^2,\label{Kief}
\end{align}
where the first inequality is by $||\Sigma_t^0||||\Lambda_t^*||\leq\mathcal{B}(\mathbf{K}_0)(\sigma_{\max}^\mathbf{CQC^T+R}+||CA||^2\mathcal{A}(\mathbf{K}^*)), t\in\mathbb{T}$. Using \eqref{fieK} and \eqref{Kief}, we have
\begin{equation}
	\nonumber
		||\mathbf{K}_k-\mathbf{K}^*||_F^2\leq c_{10}(1-\alpha)^k||\mathbf{K}_0-\mathbf{K}^*||_F^2, k\in\mathbb{N},
\end{equation}
where $c_{10}=\frac{\mathcal{B}(\mathbf{K}_0)(\sigma_{\max}^\mathbf{CQC^T+R}+||CA||^2\mathcal{A}(\mathbf{K}^*))}{\sigma_{\min}^\Sigma\sigma_{\min}^\mathbf{R}}$.
\end{proof}

\section{Discussions}

This paper is on learning the KF by optimization theory and {high dimensional statistics}. Compared with existing  asymptotic results of adaptive filtering \cite{1100694,1624478,ODELSON2006303,9044358}, we introduce a SGD method for a finite-horizon KF problem, and investigate the sample complexity of non-asymptotic error bound and the global convergence of the proposed SGD method. 

Recently, there have been many researches on handling KF problems by optimization theory and high dimensional probability. Specifically, {In \cite{10156641}}, a steady-state KF problem is investigated in the setting of both unknown model parameters and unknown noise covariance matrices. By solving receding-horizon KF problems with SGD method and zeroth-order optimization technique, the  steady-state optimal solution to infinite-horizon KF problem is approximated and the sample complexity of this framework is also studied \cite{10156641}. 
Compared with \cite{10156641}, we propose a SGD method to solve a  finite-horizon KF problem in the setting of unknown noise covariance matrices and yet known system parameters. 
%
%Inspired by \cite{10156641}, 
%
Specifically, we introduce a cost function on the observation and prove that the minimizer to this cost function is the Kalman gain; we further construct an unbiased estimation of gradient by using the model parameters and observations, and show the sample complexity and convergence rate of the resulting SGD method. 
The results of sample complexity indicates that our estimation method needs less samples than that of zeroth-order optimization technique \cite{10156641}.

The work \cite{Talebi2023DatadrivenOF} examines learning the steady-state KF by a SGD method in the setting of known system parameters and unknown noise covariance matrices. Using the duality between control and estimation, the KF problem is  reformulated as an optimal control problem of the adjoint system, and a SGD method is introduced on this landscape; by the approach of high dimensional statistics, the non-asymptotic error bound guarantee of the proposed SGD method is presented under the assumption that the noise is bounded. The results of this paper can be viewed as an extension of \cite{Talebi2023DatadrivenOF} to the setting of handling finite-horizon KF problem, and there are mainly two facts that make our results differ from those of \cite{Talebi2023DatadrivenOF}.
Firstly, the KF is time-dependent in our setting, and we reformulate the learning problem as an optimal control problem of the dual system, which differs from the deterministic one for the backward adjoint system of \cite{Talebi2023DatadrivenOF}. %The duality of this paper is used to analyze the equivalence between the proposed optimal control problem and the finite-horizon KF problem, and yet the duality in \cite{Talebi2023DatadrivenOF} is used to propose the SGD method.
Secondly, we consider the scenario that system noise and observation noise are both subgaussian; though explicit results are just given for the case of Gaussian noise, they can be easily extended to the setting of subgaussian noise by changing some coefficients only. Note that a bounded random variable is subgaussian and the work  \cite{Talebi2023DatadrivenOF} only considers the scenario of bonded noises. Interestingly, the non-asymptotic error bound of \cite{Talebi2023DatadrivenOF} depends explicitly on the bound of the noises.

 %which contains the scenario that noise is bounded, and give the . The sample complexity of our method is same with \cite{Talebi2023DatadrivenOF}.    
%W This problem is not be discussed in detail in the past, we formulate the learning problem into an optimization problem inspired by \cite{10156641} and \cite{Talebi2023DatadrivenOF}, and transform the optimization problem into the form of an optimal control problem. Then we analyze the global linear convergence guarantee of exact GD method, the analysis is not essentially different from analysis of \cite{doi:10.1137/20M1382386}. The main contribution of this paper is that we do not use zeroth-order optimization method to construct an biased estimation of gradient like other PG method\cite{doi:10.1137/20M1382386}, using high dimensional statistics and model parameters, we propose an unbiased estimation of gradient and analyze the convergence rate and sample complexity of SGD method  based on this unbiased estimation.

%\cite{10156641} 
\section{Numerical Example}

In this section, we give some numerical simulations to verify the effectiveness of our method. Let the system parameters and initial state be
\begin{equation}
	\nonumber
	\begin{aligned}
		&A=\begin{bmatrix}
			0.24& -0.18& -0.3118\\
			-0.0578& 0.4839& -0.0279\\
			-0.1283& -0.0138& 0.4761\\
		\end{bmatrix},C=\begin{bmatrix}
			0& 0.7071& 1.2247\\
			0.7071& -0.5125& 1.1124\\
		\end{bmatrix},\\&Q=\begin{bmatrix}
			0.61& -0.195& -0.3377\\
			-0.195& 0.775& -0.0953\\
			-0.3377&  -0.0953&   0.665\\
		\end{bmatrix},R=\begin{bmatrix}
			0.9& 0\\
			0& 0.6\\
		\end{bmatrix}, P_0=\begin{bmatrix}
		0& 0& 0\\
		0& 0& 0\\
		0& 0& 0\\
	\end{bmatrix}, x_0=\begin{bmatrix}
	1\\
	1\\
	0\\
\end{bmatrix}
	\end{aligned}
\end{equation}
with $Q_t\equiv Q, R_t\equiv R, t\in\mathbb{N}$. The normalized estimation error is defined as $\frac{f(\mathbf{K}_k)-f(\mathbf{K}^*)}{f(\mathbf{K}^*)}$.

We first verify the convergence of exact GD method. Letting $M=3$, $\mathbf{K}_0=0_{3\times 6}$, step size $\eta=0.0008$, and number of iterations $V=1000$, the numerical result of running exact GD method is shown in Figure \ref{fig:1}; it shows that $f(\mathbf{K}_k)$ converges to $f(\mathbf{K}^*)$ with fixed step size.
\begin{figure}[hp]
	\centering
	\includegraphics[scale=0.6]{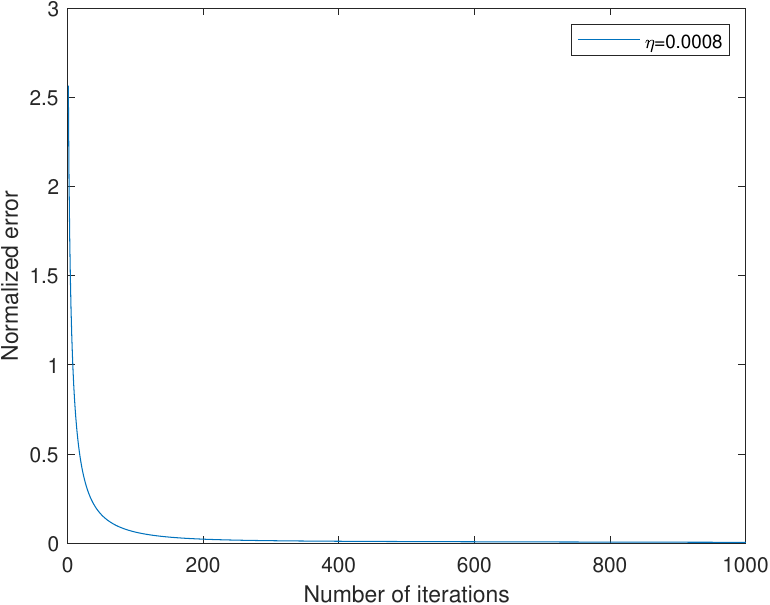}
	\caption{Convergence of GD}
	\label{fig:1}
\end{figure}

For the setting of unknown covariance matrices $Q$ and $R$, we use Algorithm 1 to compute $K_t^*, t\in \mathbb{T}$ with $M=3$. Let initial policy $\mathbf{K}_0=0_{3\times 6}$, step size $\eta=0.0008$, number of iterations $V=4000$, and number of samples $L=200$. The numerical result is shown in Figure \ref{fig:2} and the average process of normalized estimation error over 10 simulations converges to $0$ around. If we set the number of samples $L$ be $2000$, the ultimate normalized estimation error and fluctuation range will be smaller as shown in Figure \ref{fig:3}. This indicates that  $\mathbf{K}_k$ will converge to $\mathbf{K}^*$ with enough samples.

\begin{figure}[h]
	\centering
	\subfigure[$L=200$]{
		\includegraphics[scale=0.5]{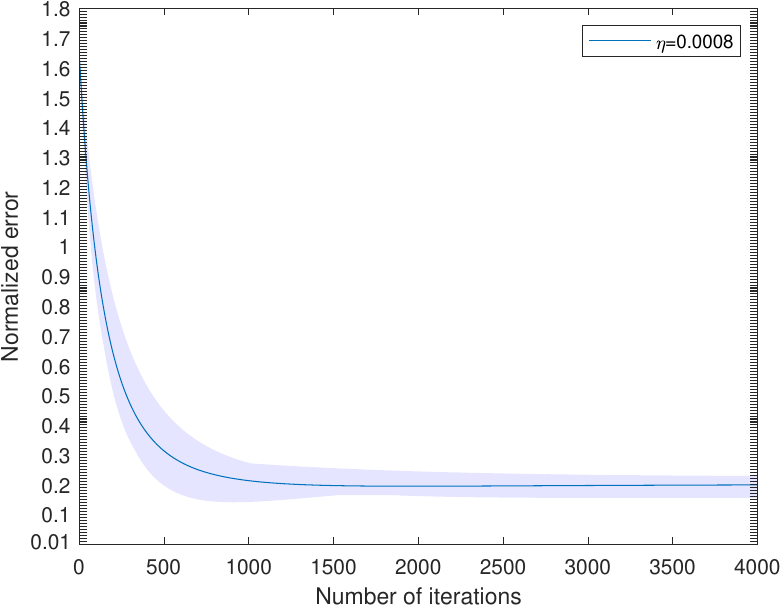}
		\label{fig:2}
		}
	\subfigure[$L=2000$]{
	\includegraphics[scale=0.5]{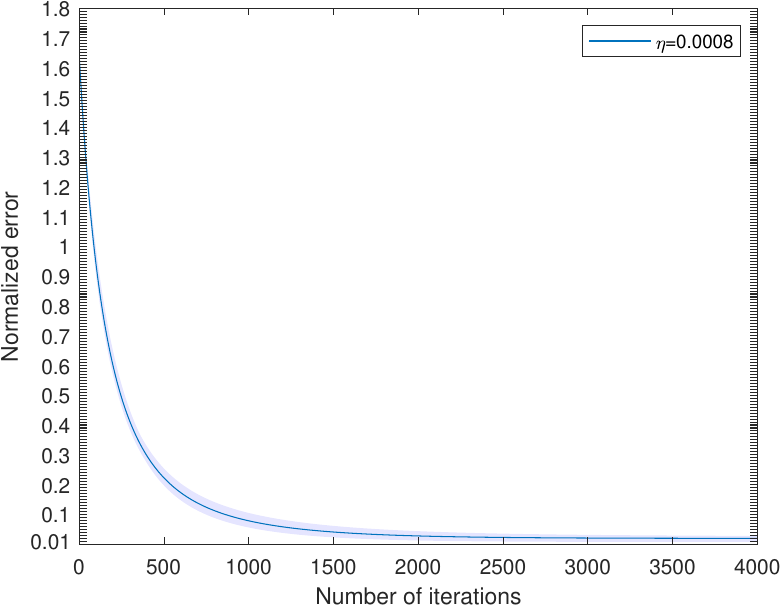}
\label{fig:3}}
\caption{The convergence of SGD over 10 simulations with different numbers of samples. The solid blue line is the average process of normalized estimation error over 10 simulations, and the shaded  light blue area is the fluctuation range of normalized estimation error over 10 simulations. %The error of average process and fluctuation range decrease when number of samples increase.
}
\label{Fig.main}
\end{figure}

Consider the scenario that $Q_t$ changes with time slowly
\begin{equation}
	\nonumber
	\begin{aligned}
		\delta Q=\begin{bmatrix}
			0.12& -0.08& 0\\
			-0.08& 0.12& 0\\
			0&   0&   0.05\\
		\end{bmatrix},
		Q_t=\begin{bmatrix}
			0.61& -0.195& -0.3377\\
           -0.195& 0.775& -0.0953\\
           -0.3377&  -0.953&   0.665\\
		\end{bmatrix}+t*\delta Q,~~~t=0,\dots,M+N.
	\end{aligned}
\end{equation}
Set initial policy $\mathbf{K}_0=\{K^0_0,K^0_1,K^0_2\}=0_{3\times 6}$, step size $\eta=0.0008$, and number of iterations $V=4000$.
Figure \ref{fig:4} and Figure \ref{fig:5} show that the average process of normalized estimation error over 10 simulations converges to $f(\mathbf{K}^*)$ around with number of samples $L=200$ and $L=2000$, respectively. %It indicates that $\mathbf{K}_k$ can also monotonically converge to $\mathbf{K^*}$ when $Q_t$ changes with time slowly. 
Comparing Figure \ref{Fig.main} with \ref{Fig.main2}, the estimation error in the setting of time-varying $Q_t$ is bigger than that in the setting of time-invariant $Q_t$. 

\begin{figure}[hp]
	\centering
	\subfigure[$L=200$]{
		\includegraphics[scale=0.5]{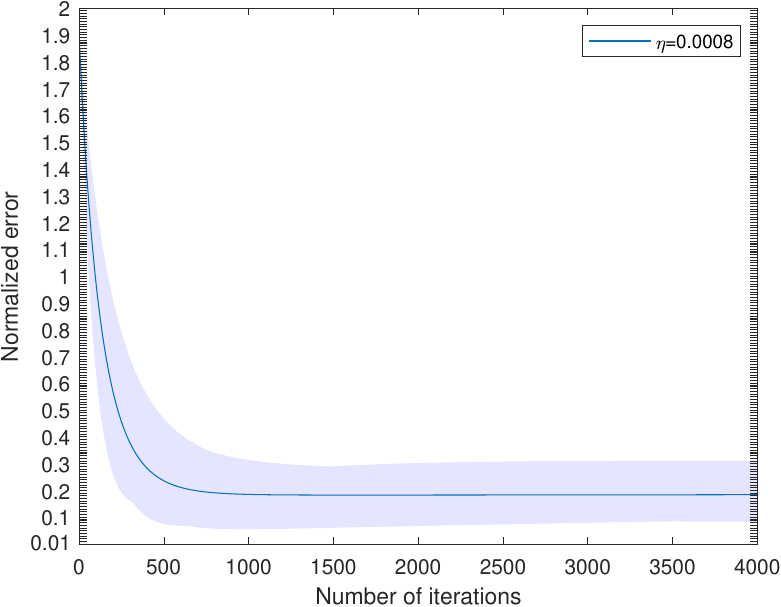}
		\label{fig:4}
	}
	\subfigure[$L=2000$]{
		\includegraphics[scale=0.5]{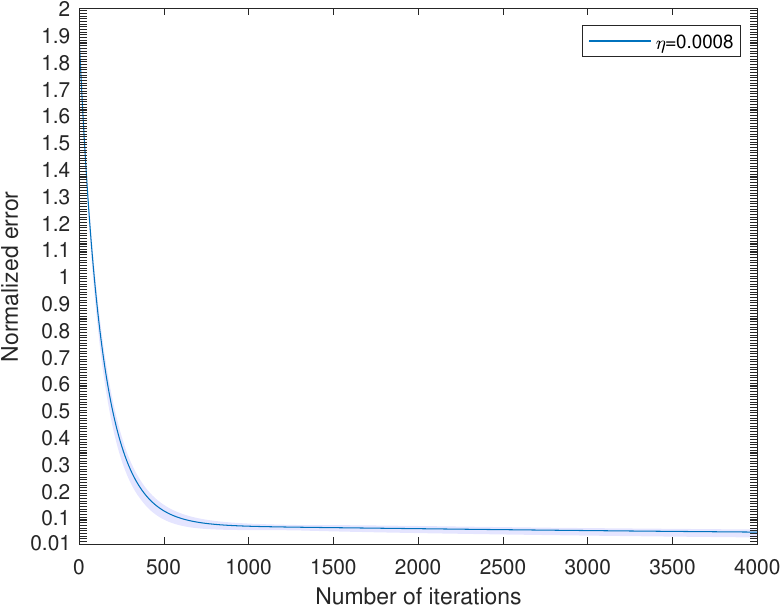}
		\label{fig:5}}
	\caption{The convergence of SGD over 10 simulations with different numbers of samples with time-varying $Q_t$. }
	\label{Fig.main2}
\end{figure}

\section{Conclusion}

In this work, we consider to solve the finite-horizon KF problem with unknown noise covariance, which is reformulated as a policy optimization problem of the dual system. We provide the global convergence guarantee and sample complexity of the proposed SGD method. Our consideration might be viewed as a supplement to the adaptive filtering methodology. 
In the future, we will explore the finite-horizon KF problem and steady-state KF problem for more general systems.

\bibliographystyle{unsrt}
\bibliography{reference}
\bibliographystyle{plainnat}
\vspace{12pt}
\end{document}